\pgfplotsset{compat=newest}
\newcommand{\lp}{\left (}
\newcommand{\rp}{\right )}
\newcommand{\bp}{\begin{pmatrix}}
\newcommand{\ep}{\end{pmatrix}}
\newcommand{\R}{\mathbb{R}}
\newcommand{\pt}{\partial_t}
\newcommand{\px}{\partial_x}
\newcommand{\dt}{{\Delta t}}
\newcommand{\dx}{{\Delta x}}
\newcommand{\eps}{\varepsilon}
\newcommand{\F}{\mathcal{F}}
\newcommand{\jmh}{{j - \frac 1 2}}
\newcommand{\jph}{{j + \frac 1 2}}
\newcommand{\minmod}{\textsf{minmod}\xspace}
\newcommand{\D}{\mathcal{D}}
\newcommand{\tap}{TVD-AP\xspace}       
\newcommand{\tapm}{AP-MOOD\xspace} 
\newtheorem{algorithm}{Algorithm}
\newtheorem{lemma}{Lemma}
\newtheorem{remark}{Remark}
\newtheorem{theorem}{Theorem}
\newtheorem{proposition}{Proposition}
\begin{document}

\begin{frontmatter}
\title{Second order Implicit-Explicit Total Variation Diminishing schemes for the Euler system in the low Mach regime}
\author[ferrara]{Giacomo Dimarco}
\ead{giacomo.dimarco@unife.it}

\author[imb]{Rapha{\"e}l Loub{\`e}re}
\ead{raphael.loubere@math.u-bordeaux.fr}

\author[imt]{Victor Michel-Dansac}
\ead{Victor.Michel-Dansac@math.univ-toulouse.fr}

\author[imt]{Marie-H{\'e}l{\`e}ne Vignal$\*$}
\ead{mhvignal@math.univ-toulouse.fr}
\cortext[cor1]{Corresponding author}

\address[ferrara]{Department of Mathematics and Computer Science, University of
Ferrara, Ferrara, Italy}
\address[imb]{CNRS and Institut de Math\'{e}matiques de Bordeaux (IMB)
 Universit{\'e} de Bordeaux, France}
\address[imt]{Institut de Math\'{e}matiques de Toulouse (IMT),
Universit{\'e} P. Sabatier, Toulouse}

\begin{abstract}
In this work, we consider the development of implicit explicit total variation diminishing (TVD) methods (also termed SSP: strong stability preserving) for the compressible isentropic Euler system in the low Mach number regime. The scheme proposed is asymptotically stable with a CFL condition independent from the Mach number and it degenerates in the low Mach number regime to a consistent discretization of the  incompressible system. Since, it has been proved that implicit schemes of order higher than one cannot be TVD (SSP) \cite{GotShuTad}, we construct a new paradigm of implicit time integrators by coupling first order in time schemes with second order ones in the same spirit as highly accurate shock capturing TVD methods in space. For this particular class of schemes, the TVD property is first proved on a linear model advection equation and then extended to the isentropic Euler case. The result is a method which interpolates from the first to the second order both in space and time, which preserves the monotonicity of the solution, highly accurate for all choices of the Mach number and with a time step
only restricted
by the non stiff part of the system. In the last part, we show thanks to one and two dimensional test cases that the method indeed possesses the claimed properties.
\end{abstract}

\begin{keyword}
 Asymptotic Preserving \sep
 IMEX schemes \sep
 SSP-TVD property \sep
 Low Mach number limit \sep
 High-order schemes \sep
 Hyperbolic conservation laws.
\end{keyword}
\end{frontmatter}

\section{Introduction} \label{sec:intro}
The analysis \cite{KlaMaj,KlaMaj2,Sch2,Asano, LioMas,Ala} and the development of numerical methods \cite{Issa,Harlow,Patankar,Turkel,Klein,Collela,Guillard,Viozat,Munz,Heul,   GH,Munz2,Munz3,Deg_Del_Sang_Vig,Dellacherie1,DegTan,Degond2,Haack,Chalon,Vila,Kheriji,Dellacherie2,Chalon2,DimLouVig} for the passage from compressible to incompressible gas dynamics has been and is still a very active field of research. The compressible Euler equations which describe conservation of density, momentum and energy in a fluid flow become stiff when the Mach number tends to zero. This implies a fluid flow almost at rest. In this case, the pressure waves move at very large speed compared to the average speed of the gas. Thus, a standard model approximation consists in replacing the density conservation equation by a constraint on the velocity divergence, set consequently equal to zero. In addition, the momentum equation could be replaced by an elliptic equation for the pressure. We refer to that situation to as the incompressible Euler model which is used to describe many different flow conditions. However, there are situations in which the Mach number may be small in some part of the domain and large in others or may strongly change in time. In these cases, one should deal with the coupling of incompressible and compressible regions the topologies of which change in time. This causes from the numerical point of view many difficulties since standard domain decomposition techniques which couple the solution of the compressible equations with the solution of the incompressible system may be difficult to use \cite{Bog}. Thus, one solution consists in solving the more complete compressible Euler system also in the stiff regime. However, this introduces strong drawbacks since the Mach number may become extremely small causing severe time step limitations. To circumvent these problems, in the recent past, asymptotic stable techniques have been
developed~\cite{Degond3,Deg_Del_Sang_Vig,DegTan,Degond2,Haack,Tang,Chalon,Noelle,Chalon2,DimLouVig}. These techniques
permit to compute the solution of such stiff problems avoiding time step limitations directly related to the low Mach number regime. In addition, these methods
lead to
consistent approximations of the
limit incompressible system when the Mach number goes to zero. In this context, in a recent work \cite{DimLouVig}, a first order asymptotic preserving method has been developed. In particular, this work dealt with an analysis of the stability properties which
led to
a stability restriction on the numerical method independent from the Mach number.
The $L^2$ and $L^\infty$ properties of the method have been analyzed in detail.

In the present work, we extend the previous study to the second order in time and space situation.
We first present a second order extension of our previous method which is $L^2$ stable. Successively, since it has been proved~\cite{GotShuTad} that the $L^\infty$ and Total Variation Diminishing (TVD, also named Strong Stability Preserving, SSP) properties cannot be assured for an unconstrained implicit in time scheme of order greater than one, we construct a new paradigm of highly accurate implicit in time schemes. We stress that as opposed to several recent studies on IMEX-SSP methods or Implicit-SSP methods \cite{Ferracina,Hig2,Ket,Constantinescu,Hig,Song,Got} in which the authors look for the largest possible time step which allows the SSP property to be preserved, here we do not pursue in this direction since the stiffness of the equations typically requires numerical methods in which the time steps are disconnected from the stiff scales, and, possibly, several orders of magnitude larger. This as shown in \cite{GotShuTad} will not be possible with standard IMEX-SSP methods. For this reason, the direction chosen in this work consists in constructing a TVD Asymptotic Preserving (AP) scheme using a convex combination of first and second order implicit-explicit (IMEX) methods in the same spirit as high resolution shock capturing TVD methods in space~\cite{LeV2,Toro}. This permits to  prove that our TVD AP scheme possesses both $L^\infty$ and TVD  properties and it opens the way to the construction of arbitrarily high order accurate methods with the same properties. Details of this approach and development of high resolution schemes by combining schemes of order higher than two with first order implicit methods in the case of the linear and non linear transport equations are currently under study \cite{DLV}.

In a second part of the work, we discuss limiters which allow to detect the
troubled
situations in which the TVD property is violated and
 subsequently
 to pass from the second order accurate scheme to the TVD-AP scheme without losing accuracy. The approach proposed is based on the so-called MOOD (Multidimensional Optimal Order Detection) method~\cite{ClaDioLou,DioClaLou,DioLouCla} originally developed to detect the loss of physical properties in space of high resolution methods and to reduce the order of the space discretization to restore the physical properties of the problem. Here, we extend the previous method to the case of the implicit time discretizations. Thus resuming, the proposed method ensures a non oscillatory approximation of our original problem which is more accurate than the one given by a first-order AP scheme, stable independently on the Mach number and which degenerates to an high order time-space discretization of the incompressible Euler equations in the limit
when the Mach number goes to zero.

The article is organized as follows. In Section~\ref{presentation}, we briefly recall the isentropic/ isothermal system of Euler equations and its low Mach number limit as well as the first  order accurate Asymptotic preserving scheme presented in~\cite{DimLouVig} which is the basis of the second order extension here considered. Then, in
Section~\ref{sec:asymptotically_accurate_scheme}, we present a second order AP scheme in time for the isentropic Euler system and we show that even if stable, it presents some non physical oscillations when the explicit C.F.L. condition is violated. Thus, in Section~\ref{model_pb}, we introduce a model problem which will be used to construct a TVD AP scheme for the Euler system and we study its TVD, $L^\infty$ and $L^2$ stability properties. In Section \ref{APEuler} we extend the previous scheme to the low Mach number case and we introduce the MOOD procedure to detect the loss of $L^\infty$ stability.
Finally, in Section~\ref{numeric}, we show the good behavior of our AP schemes with different numerical results for three different one dimensional test cases and a two-dimensional
one. A concluding Section ends the paper.

\section{The  first-order asymptotic-preserving scheme for the Euler system in the low Mach number limit}\label{presentation}

We consider a bounded polygonal domain $\Omega \in \R^d$, where $d \in \{ 1, 2, 3 \}$.
The space and time variables are respectively denoted by $x \in \Omega$ and $t \in \R_+$.
We study the isentropic/isothermal rescaled Euler model with  $\varepsilon> 0$ the squared Mach number (see \cite{KlaMaj,MetSch} for instance).
This reads
\begin{equation}
    \label{eq:isentropic_2D}
    \begin{array}{l}
        \displaystyle\pt \rho + \nabla \cdot ( \rho U ) = 0, \\
        \displaystyle\pt ( \rho U ) + \nabla \cdot \lp \rho U \otimes U \rp + \frac 1 \varepsilon \nabla p(\rho) = 0,
    \end{array}
\end{equation}
where $\rho(t,x) > 0$ is the density of the fluid,
$U(t,x) \in \R^d$ its velocity,
and $p(\rho) = \rho^\gamma$ its pressure.
The parameter $\gamma \geq 1$ is the ratio of specific heats, $\gamma = 1$ corresponds to isothermal fluids while~$\gamma > 1$ to isentropic ones.

Equipped with suitable initial and boundary conditions (consistent with the limit model),
system (\ref{eq:isentropic_2D}) tends to the incompressible isentropic/isothermal Euler system when $\varepsilon\rightarrow 0$
(see \cite{KlaMaj,KlaMaj2,Sch2,Asano,LioMas,MetSch,Ala} for rigorous results).
A formal derivation of this limit is presented for instance in \cite{DimLouVig}.
In the above cited works, the authors introduce well-preparedness and incompressibility assumptions on the initial and boundary conditions to show that
the equations (\ref{eq:isentropic_2D}) tend to the following incompressible Euler equations in the low Mach number limit $\varepsilon \to 0$:
\begin{equation}
    \label{eq:incompressible_Euler_2D}
    \begin{array}{l}
\displaystyle        \rho = \rho_0, \\
\displaystyle        \nabla \cdot U = 0, \\
 \displaystyle       \rho_0 \pt U + \rho_0 \nabla \cdot \lp U \otimes U \rp + \nabla \pi_1 = 0,
    \end{array}
\end{equation}
where the first-order correction of the pressure, denoted by $\pi_1$, is implicitly defined by the incompressibility constraint $\nabla \cdot U = 0$.

We now briefly recall the numerical method \cite{DimLouVig} which serves as a basis for the new method introduced in the next Section.
The discretization of the space and time domains follows the usual finite volume framework. The solution $W(t,x)=(\rho,\rho U)(t,x)$ of the Euler equations is
approximated at time $t^n = n \dt$, where $\dt$ is the time step, by $W^n$.
The scheme relies on an IMEX (IMplicit-EXplicit) decomposition of system (\ref{eq:isentropic_2D}) (see \cite{Deg_Del_Sang_Vig,DegTan,Tang,DimLouVig}):
for all $n \geq 0$, the semi-discrete in time form of the scheme reads
\begin{equation}
    \label{eq:first_order_scheme_semi_discrete_coupled}
    \frac { W^{n+1} - W^n } \dt + \nabla \cdot F_e(W^n) + \nabla \cdot F_i(W^{n+1}) = 0.
\end{equation}
where $    F_e(W^n) = (0 ,\rho^n U^n \otimes U^n )$  and
   $ F_i(W^{n+1}) =(\rho^{n+1} U^{n+1}, p(\rho^{n+1}) / \varepsilon \ \mathbb{I}_2)$ and $F_e$ is taken explicitly while $F_i$ implicitly.
Note that the two systems associated to the two fluxes taken singularly are hyperbolic. An interesting property of such approach is that the resolution of the two equations
composing the system can be decoupled. Indeed, in (\ref{eq:first_order_scheme_semi_discrete_coupled}), taking the divergence of the momentum equation and
inserting it into the mass equation yields:
\begin{subequations}
    \label{eq:first_order_scheme_semi_discrete}
    \begin{align}
        \label{eq:first_order_density}
        \frac { \rho^{n+1} - \rho^n } \dt &+ \lp \nabla \cdot \lp \rho U \rp \rp^n
                                           - \dt \lp \nabla^2 : \lp \rho U \otimes U \rp \rp^n
                                           - \frac \dt \varepsilon \lp \Delta p(\rho) \rp^{n+1} = 0, \\
        \label{eq:first_order_momentum}
        \frac { (\rho U)^{n+1} - (\rho U)^n } \dt &+ \lp \nabla \cdot \lp \rho U \otimes U \rp \rp^n + \frac 1 \varepsilon \lp \nabla p(\rho) \rp^{n+1} = 0,
    \end{align}
\end{subequations}
where $\nabla^2$ and $:$ are respectively the tensor of second order derivatives and the contracted product of two tensors.
Then, one can solve first the nonlinear equation (\ref{eq:first_order_density}) which gives $\rho^{n+1}$, and, successively get
the momentum
from (\ref{eq:first_order_momentum}). The implicit treatments of the pressure gradient and the mass flux respectively provide the asymptotic consistency and the uniform stability of the scheme \cite{DimLouVig}.

We now present the space discretization, in one space dimension for the sake of clarity.
The space domain is assumed to be partitioned in cells of center $x_j$ and size $\dx$. Then,
on~$[t^n, t^{n+1})$ the fully discrete version of (\ref{eq:first_order_scheme_semi_discrete}) reads as follows
\begin{equation}
    \label{eq:first_order_scheme_fully_discrete}
    \begin{aligned}
        \frac { W_j^{n+1} - W_j^n } \dt + \frac { (\F_e)_{\jph}^n - (\F_e)_{\jmh}^n } \dx + \frac { (\F_i)_{\jph}^{n,n+1} - (\F_i)_{\jmh}^{n,n+1} } \dx
                                         - \dt \bp \lp \Delta(\rho u^2) \rp_j^n +\displaystyle \frac{1}{\varepsilon}\, \lp \Delta p(\rho) \rp_j^{n+1}\cr 0 \ep = 0.
    \end{aligned}
\end{equation}
The explicit numerical flux is given by
\begin{equation}
    \label{eq:explicit_flux}
    (\F_e)_{\jph}^n := \frac {F_e(W_j^n) + F_e(W_{j+1}^n)} 2 + (\D_e)_{\jph}^n ( W_{j+1}^n - W_j^n ),
\end{equation}
with $(\D_e)_{\jph}^n$ the explicit viscosity coefficient, taken as half of the maximum explicit eigenvalue and given by
$(\D_e)_{\jph}^n :=  \max \lp |u_j^n|,|u_{j+1}^n| \rp$. The implicit numerical flux is given by
\begin{equation}
    \label{eq:implicit_flux}
   (\F_i)_{\jph}^{n,n+1} :=  \frac {F_i(W_j^{n,n+1}) + F_i(W_{j+1}^{n,n+1})} 2 + (\D_i)_{\jph}^n ( W_{j+1}^{n+1} - W_j^{n+1} ),
\end{equation}
where $W_j^{n,n+1}=(\rho^{n+1}_j, q_j^n)$ and
$(\D_i)_{\jph}^n$ is the implicit viscosity coefficient, taken as half of the maximum implicit eigenvalue
$ (\D_i)_{\jph}^n := \frac 1 2 \max \lp \sqrt{ p'(\rho_j^n)/ \varepsilon }, \sqrt{ p'(\rho_{j+1}^n)/ \varepsilon } \rp$. This choice for the implicit viscosity is enough to get an $L^\infty$ stable scheme. However, by relaxing the request and fixing it to zero one can show that an $L^2$ stable scheme is obtained. Finally, the second-order derivatives are approximated by classical second order centered differences while
the time step is constrained by the following uniform C.F.L. condition:
\begin{equation}
    \label{eq:CFL_first_order}
    \dt \leq \frac{\dx}{ \max_j \lp 2 |u_j^n| \rp}.
\end{equation}
Note that $2\, u$ corresponds to the first eigenvalue of the explicit flux and as expected, this C.F.L. condition does not depend on the Mach number~$\varepsilon$.
When $\varepsilon$ tends to $0$, this scheme yields a consistent discretization of the incompressible system (\ref{eq:incompressible_Euler_2D}). In the following Sections we discuss
an extension of this method to the case of high order time and space discretizations.

\section{A second order asymptotically accurate scheme for the isentropic Euler equations}
\label{sec:asymptotically_accurate_scheme}
The second order in time extension of the method described in the previous Section is based on an Implict-Explicit (IMEX) Runge-Kutta approach \cite{AscRuuSpi,ParRus,ParRus2,dimarco1,BosRus,dimarco2,BosRus2,BisLukYel}. In particular, we make use of the second order Ascher, Ruuth and Spiteri \cite{AscRuuSpi} scheme denoted in the sequel by ARS(2,2,2). Let us observe that this scheme has been originally constructed to deal with convection-diffusion equations and in particular to deal with cases in which the diffusion (the fast scale) is taken implicit while the convection explicit. In our case, the problem is different, since the fast and the slow scales are both of hyperbolic type.
This causes a real challenge from the numerical point of view. In fact, as already mentioned, implicit methods of order higher than one for hyperbolic problems cannot be TVD \cite{GotShuTad}. Moreover, the situation does not change when implicit-explicit methods are employed as shown later. Thus, one can think to rely on optimal IMEX-SSP methods \cite{Ket,Got,Hig,Song} to solve the problem. However, in this case, time steps allowing the TVD property to be preserved are of the order of explicit time integrators. Unfortunately, since we are dealing with a limit problem, we look for a method which preserves the TVD property independently on the Mach number which eventually can also be set to zero. Thus, in order to bring remedy to this problematic situation, the idea explored in this work consists in blending together first and second order implicit time-space discretizations giving rise to a new class of high resolution in time methods which guarantees the preservation of the  $L^\infty$ stability and TVD property.
Here, we discuss two given implicit time discretizations, while we refer to \cite{DLV} for the construction of general TVD high resolution implicit-explicit time discretizations.

The Butcher tableau relative to the considered ARS(2,2,2) scheme is detailed in Table~\ref{tab:Butcher_tableaux_ARS222}  with $\beta = 1 - \sqrt{2}/ 2$ and $\alpha=1-1/(2\beta)$. Note that on the left is reported the explicit tableau applied to the flux $F_e$ while on the left the implicit tableau applied to the flux $F_i$.
\begin{table}[!ht]
    \hfill
    \begin{tabular}{c|ccc}
        0       & 0           & 0           & 0 \\
        $\beta$ & $\beta$     & 0           & 0 \\
        1       & $\alpha$  & 1 - $\alpha$ & 0 \\ \hline
                & $\alpha$  & 1 - $\alpha$ & 0
    \end{tabular}
    \hfill
    \begin{tabular}{c|ccc}
        0       & 0 & 0           & 0       \\
        $\beta$ & 0 & $\beta$     & 0       \\
        1       & 0 & 1 - $\beta$ & $\beta$ \\ \hline
                & 0 & 1 - $\beta$ & $\beta$
    \end{tabular}
    \hfill \vphantom{phantom}
    \caption{Butcher tableaux for the ARS(2,2,2) time discretization.
             Left panel: explicit tableau.
             Right panel: implicit tableau.}
    \label{tab:Butcher_tableaux_ARS222}
\end{table}
Remarking that $\alpha=\beta-1$ and so $1-\alpha=2-\beta$,
the corresponding semi-discretization of the Euler system is
given by
\begin{subequations}\label{semi_ARS}
\begin{equation}\label{semi_ARS_step_one}
\frac{W^\star-W^n}{\Delta t}+\beta\, \nabla\cdot F_e(W^n)+\beta\, \nabla\cdot F_i(W^\star)=0,
\end{equation}
\begin{equation}\label{semi_ARS_step_two}
\frac{W^{n+1}-W^n}{\Delta t}+(\beta-1)\, \nabla\cdot F_e(W^n)+(2-\beta)\, \nabla\cdot F_e(W^\star)+(1-\beta)\, \nabla\cdot F_i(W^\star)+\beta\,\nabla\cdot F_i(W^{n+1})=0.
\end{equation}
\end{subequations}
Likewise for the first-order accurate scheme, the previous second-order accurate discretization has an uncoupled formulation.
Let us first establish
the first step~(\ref{semi_ARS_step_one}).
Taking the divergence of the momentum equation of (\ref{semi_ARS_step_one}) and inserting the value of $\nabla\cdot (\rho U)^\star$ into the mass equation of (\ref{semi_ARS_step_one}), yield the following uncoupled formulation:
\begin{equation*}
\frac{W^\star-W^n}{\Delta t}+\beta\, \nabla\cdot F_e(W^n)+\beta\, \nabla\cdot F_i(W^{n,\star})
-\beta^2\, \Delta t\left(\!\!\begin{array}{c}
\nabla^2:(\rho U \otimes U )^n+\displaystyle\frac{1}{\varepsilon}\, \Delta p(\rho^\star)\\0
\end{array}
\!\!\right)
=0,
\end{equation*}
where $W^{n,\star}=(\rho^\star,(\rho U)^n)$.
Using now the same notation as in the previous section for the first-order accurate scheme, the full discretized uncoupled first step in one dimension, is given by
\begin{subequations}\label{semi_ARS_uncoupled}
\begin{equation}
    \label{eq:second_order_fully_discrete_step_one}
        \frac { W_j^{\star} - W_j^n } \dt +\beta\,  \frac { (\F_e)_{\jph}^n - (\F_e)_{\jmh}^n } \dx + \beta\, \frac { (\F_i)_{\jph}^{n,\star} - (\F_i)_{\jmh}^{n,\star} } \dx
                                         - \beta^2\, \dt \bp \lp \Delta(\rho u^2) \rp_j^n +\displaystyle \frac{1}{\varepsilon}\, \lp \Delta p(\rho) \rp_j^{\star}\cr 0 \ep = 0.
\end{equation}
We turn to the uncoupled formulation of the second step~(\ref{semi_ARS_step_two}).
We insert
the divergence of $\rho\, U^{n+1}$, obtained with the momentum equation of~(\ref{semi_ARS_step_two}),
into the mass equation of (\ref{semi_ARS_step_two}).
This yields
$$ \displaylines{\frac{W^{n+1}-W^n}{\Delta t}+(\beta-1)\, \nabla\cdot F_e(W^n)+(2-\beta)\, \nabla\cdot F_e(W^\star) +(1-\beta)\, \nabla\cdot F_i(W^\star)
+\beta\,\nabla\cdot F_i(W^{n,n+1})\hfill\cr
\hfill-\beta\, \Delta t \left(\!\!\begin{array}{c}
(\beta-1)\, \nabla^2:(\rho U \otimes U )^n+(2-\beta)\,\nabla^2:(\rho U \otimes U )^\star+ \displaystyle\frac{(1-\beta)}{\varepsilon}\, \Delta p(\rho^\star)
+\frac{\beta}{\varepsilon}\, \Delta p(\rho^{n+1})
\\0
\end{array}
\!\!\right) = 0.}
$$
Using the same notation as before, the fully discretized second step in one dimension is given by
\begin{equation}
    \label{eq:second_order_fully_discrete_step_two}
         \begin{array}{l}
       \displaystyle\frac{W^{n+1}_j-W^n_j}{\Delta t}+(\beta-1)\,\frac { (\F_e)_{\jph}^n - (\F_e)_{\jmh}^n } \dx+(2-\beta)\, \frac { (\F_e)_{\jph}^\star - (\F_e)_{\jmh}^\star } \dx  \\
        [8pt]\hspace{1.8cm}  \displaystyle               +(1-\beta)\, \frac { (\F_i)_{\jph}^{\star,\star} - (\F_i)_{\jmh}^{\star,\star} } \dx
        +\beta\,\frac { (\F_i)_{\jph}^{n,n+1} - (\F_i)_{\jmh}^{n,n+1} } \dx\\
       [8pt] \hspace{1.8cm}                    \displaystyle             -\beta\, \Delta t \left(\!\!\begin{array}{c}
(\beta-1)\, \,\lp\Delta(\rho u^2) \rp_j^n+(2-\beta)\,\lp\Delta(\rho u^2) \rp_j^\star+ \displaystyle\frac{ (1-\beta)}{\varepsilon}\,  \lp \Delta p(\rho) \rp_j^{\star}
+\frac{\beta}{\varepsilon}\, \lp \Delta p(\rho) \rp_j^{n+1}
\\
[10pt]0
\end{array}
\!\!\right) = 0.
    \end{array}
\end{equation}

\end{subequations}

\begin{lemma}
\!\!The scheme (\ref{semi_ARS}) is asymptotically consistent with system (\ref{eq:incompressible_Euler_2D}) in the limit $\varepsilon \rightarrow 0$.
\end{lemma}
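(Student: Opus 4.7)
The plan is to carry out a formal asymptotic expansion of each stage of the ARS(2,2,2) scheme in powers of $\varepsilon$, following the template used in~\cite{DimLouVig} for the first-order scheme and applying it stage-by-stage. Writing $\rho^k = \rho_0^k + \varepsilon \rho_1^k + O(\varepsilon^2)$ and $(\rho U)^k = q_0^k + \varepsilon q_1^k + O(\varepsilon^2)$ for $k\in\{n,\star,n+1\}$, and starting from well-prepared data at $t^n$ (so that $\rho_0^n$ is a spatial constant and $\nabla\cdot U^n=0$), I expand $p(\rho^k)=p(\rho_0^k)+\varepsilon\, p'(\rho_0^k)\rho_1^k+O(\varepsilon^2)$ and collect terms order by order.

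First I handle step~(\ref{semi_ARS_step_one}). Multiplying the momentum component by $\varepsilon$ and sending $\varepsilon\to 0$ yields $\beta\,\nabla p(\rho_0^\star)=0$, so $\rho_0^\star$ is spatially constant and hence $p(\rho_0^\star)=p(\rho_0^n)$. The mass component then reads $\rho_0^\star-\rho_0^n+\beta\Delta t\,\nabla\!\cdot q_0^\star=0$; the left-hand side is spatially constant, so $\nabla\!\cdot q_0^\star$ is constant, and the boundary conditions (consistent with the incompressible limit) force it to vanish, giving $\rho_0^\star=\rho_0^n=\rho_0$ and $\nabla\!\cdot U^\star=0$. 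The $O(1)$ part of the momentum equation then identifies a pressure correction $\pi_1^\star := p'(\rho_0)\rho_1^\star$ acting as the Lagrange multiplier enforcing this incompressibility constraint, and produces a first-stage discretization of the incompressible momentum equation. I repeat the same argument on step~(\ref{semi_ARS_step_two}): the $O(1/\varepsilon)$ contribution of its momentum equation is $(1-\beta)\nabla p(\rho^\star)+\beta\nabla p(\rho^{n+1})$, and since $p(\rho_0^\star)$ is already constant, the remaining condition $\beta\,\nabla p(\rho_0^{n+1})=0$ forces $\rho_0^{n+1}=\rho_0$; the mass equation at this stage then delivers $\nabla\!\cdot U^{n+1}=0$, and extracting the $O(1)$ part of the momentum equation gives the second-stage incompressible update with pressure correction~$\pi_1^{n+1}$.

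I would conclude by observing that the resulting two-stage limit system is precisely a Runge--Kutta discretization of~(\ref{eq:incompressible_Euler_2D}) with Butcher coefficients inherited from the explicit tableau of Table~\ref{tab:Butcher_tableaux_ARS222}, with $\pi_1^\star$ and $\pi_1^{n+1}$ playing the role of the incompressible pressure~$\pi_1$ at the two stages. The main obstacle I anticipate is the bookkeeping in step~(\ref{semi_ARS_step_two}): one must verify that the stabilization terms of order $\beta^2\Delta t/\varepsilon$ and $\beta(1-\beta)\Delta t/\varepsilon$ appearing in the uncoupled mass equation~(\ref{eq:second_order_fully_discrete_step_two}) do not pollute the limit but merely contribute to a discrete elliptic problem for $\pi_1^{n+1}$; this requires checking that, once $\rho_0^\star$ and $\rho_0^n$ are constant, the only non-vanishing Laplacian term at leading order is $\beta^2(\Delta t/\varepsilon)\,\Delta p(\rho^{n+1})$, whose $O(1/\varepsilon)$ coefficient must again be zero and whose $O(1)$ coefficient supplies exactly the elliptic equation that implicitly defines~$\pi_1^{n+1}$ through the divergence-free constraint on $U^{n+1}$.
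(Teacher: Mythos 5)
Your proposal follows essentially the same route as the paper: multiply the momentum stage by $\varepsilon$ to force $\nabla p(\rho^\star)=0$ and $\nabla p(\rho^{n+1})=0$, integrate the mass equations over $\Omega$ using $U\cdot\nu=0$ to fix the constants and extract the divergence-free constraint, and read off the $O(1)$ momentum updates with $\pi_1^\star$ and $\pi_1^{n+1}$ acting as Lagrange multipliers. The one substantive difference is that the paper does \emph{not} assume well-prepared data at $t^n$: it starts from general initial data, obtains $\rho^\star=\rho^{n+1}=\langle\rho^0\rangle$ directly from the integrated mass balances, and observes that at the very first time step the first stage does \emph{not} satisfy $\nabla\cdot U^\star=0$ unless $\rho^0$ is well prepared (one finds $\nabla\cdot U^\star=(\langle\rho^0\rangle-\rho^0(x))/(\Delta t\,\langle\rho^0\rangle)$), the constraint being recovered for all $n\geq 1$; this yields the stronger conclusion that the scheme projects arbitrary data onto the incompressible manifold after one step, which your well-preparedness hypothesis hides. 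Two minor points: the assertion that $\rho_0^\star$ constant \emph{hence} $p(\rho_0^\star)=p(\rho_0^n)$ is premature (the equality only follows after integrating the mass equation, as you do in the next sentence), and the anticipated obstacle concerning the $\beta^2\Delta t/\varepsilon$ stabilization terms is moot for this Lemma, which concerns the coupled semi-discrete form (\ref{semi_ARS}); the uncoupled formulation is an exact algebraic rearrangement of it, so nothing extra needs to be checked there.
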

\begin{proof}
We do not assume well-prepared initial conditions but general initial conditions $\rho(0,x)=\rho^0(x)$ and $U(0,x)=U^0(x)$. Well prepared initial conditions will converge to a constant density and a divergence free velocity when $\eps$ tends to $0$. We consider the boundary condition
$U(x,t)\cdot\nu(x)=0$ for all $t\geq 0$ and all $x\in\partial\Omega$ the boundary of $\Omega$,
where $\nu$ is the outward unit normal.

We assume that all discrete quantities (densities and momentums) have a limit when $\eps\rightarrow 0$, then at the first time-step $n=0$, multiplying the momentum equation of (\ref{semi_ARS_step_one}) and letting $\eps$ tends to $0$,
 gives $\nabla p(\rho^{\star}) = 0$ and so  $\nabla \rho^{\star}=0$. Then,
integrating the mass equation of (\ref{semi_ARS_step_one}) on the domain and using the boundary condition $U^\star\cdot \nu=0$ on $\partial\Omega$, one gets
$\rho^{\star}=<\rho^0>=1/|\Omega|\int_\Omega \rho^0(x)\, dx$. Similarly, the second stage (\ref{semi_ARS_step_two}) gives $\nabla \rho^{1}=0$, and integrating the mass equation and using the boundary conditions $U^\star\cdot \nu=U^1\cdot \nu=0$, we obtain $\rho^1=\rho^{\star}=<\rho^0>$.

Note that inserting this result into the mass equation of the first stage (\ref{semi_ARS_step_one}), we do not recover the incompressibility constraint for the first time-step
since $\nabla\cdot U^\star(x)=(<\rho^0>-\rho^0(x))/(\Delta t\, <\rho^0>)$ which equals $0$ if and only if the initial density $\rho^0$ is well prepared and tends to a constant when $\eps$ tends to $0$. But, for all $n\geq 1$, we recover the incompressibility constraint for the first stage $\nabla\cdot U ^{\star}=0$.
Finally, thanks to $\nabla\cdot U ^{\star}=0$, the density equation gives $\nabla\cdot U ^{n+1}=0$ for all $n\geq 1$.
Consequently, the scheme projects the solution over the asymptotic incompressible limit even if the initial data are not well-prepared to this limit,
we obtain $\rho^{n+1}=<\rho^0>:=\rho_0$ and $\nabla\cdot U^{n+1}=0$ for all $n\geq 1$.
Concerning the pressure, for $n\geq 1$, the limit scheme becomes
\begin{subequations}
\begin{equation}
\rho_0\, \frac{U^{\star}-U^n}{\Delta t} +\beta\rho_0 \nabla\cdot(U\otimes U)^n
                    + \beta\nabla \pi_1^{\star}  = 0,
\end{equation}
\begin{equation}
\rho_0\, \frac{U^{n+1}-U^n}{\Delta t} +\alpha\rho_0 \nabla\cdot(U\otimes U)^n +(1-\alpha)\rho_0 \nabla\cdot(U\otimes U)^\star
                   + (1-\beta)\nabla \pi_1^{\star} + \beta\nabla \pi_1^{n+1}  = 0,
\end{equation}
\end{subequations}
where $\pi_1^{\star}=\lim_{\eps\rightarrow 0}\frac{1}{\eps}\left(p(\rho^{\star})-p(\rho_0)\right)$ and $\pi_1^{n+1}=\lim_{\eps\rightarrow 0}\frac{1}{\eps}\left(p(\rho^{n+1})-p(\rho_0)\right).$
\end{proof}

Now, we test  this second order accurate uncoupled AP scheme~(\ref{semi_ARS_uncoupled}) on a shock tube test case (see Section~\ref{Sod} for the details).
The C.F.L is uniform and given by~(\ref{eq:CFL_first_order}).
The exact solution is constituted of a rarefaction wave and a shock wave and Figure \ref{fig:Riemann_problem_second_order_time} reports the numerical solution of
the first and second order AP schemes described above for different values of the Mach number $\varepsilon$ while the space discretization is always first order.
\begin{figure}[!ht]
    \centering
    \hspace{-0.85cm}
    \includegraphics[height=0.23\textwidth]{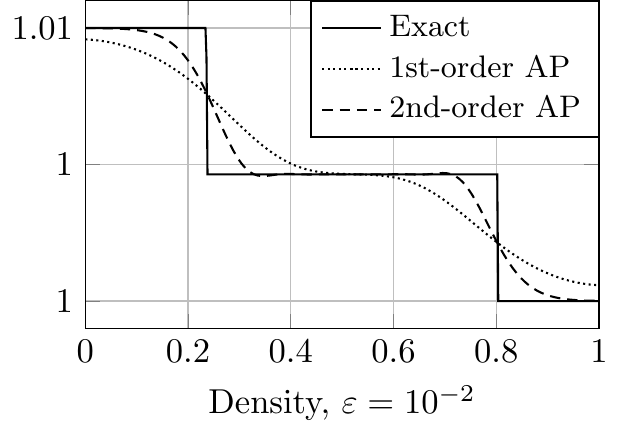} %
    \includegraphics[height=0.23\textwidth]{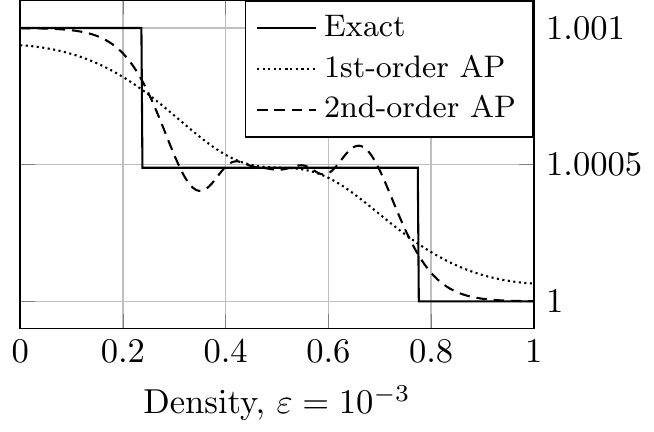}
    \includegraphics[height=0.23\textwidth]{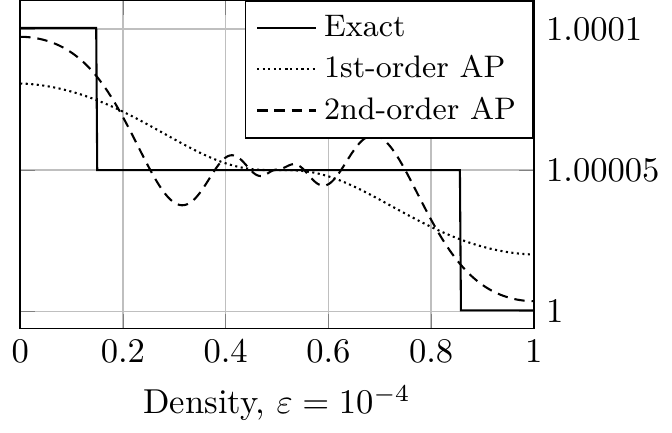}
    \caption{Approximations of the density $\rho$ for a rarefaction-shock Riemann problem and different values of the Mach number.}
    \label{fig:Riemann_problem_second_order_time}
\end{figure}
As we can see, the second-order AP scheme gives more accurate results but it presents some oscillations when the Mach number decreases for a fixed value of the time step given by the C.F.L condition (\ref{eq:CFL_first_order}). These oscillations disappear when the time step is reduced and in particular when the  non uniform explicit C.F.L condition is satisfied
($ \dt \leq \dx/( \max_j \lp  |u_j^n\pm\sqrt{p'(\rho^n_j)/\eps} | \rp)$. Thus, as anticipated, a second order implicit-explicit time discretization for this kind of problem suffers of the same limitations of standard implicit time discretizations for hyperbolic problems of order higher than one: TVD property is lost. In the sequel, we first study and find a solution to the above problematic
situations
in the simplified setting of linear transport equation and successively we extend the found result to the case of the Euler equations.


\section{Study of the stability on a model problem}\label{model_pb}
We consider the linear advection equation:
\begin{equation}
    \label{eq:model_problem}
    \pt w + c_e \px w + \frac {c_i} {\sqrt{\varepsilon}} \px w = 0,
\end{equation}
with $c_e>0$ and $c_i>0$ fixed real numbers. Note that the dependency in $\sqrt{\varepsilon}$ of the fast velocity is similar to that of the velocity of the pressure waves in the Euler system. The formal limit $\varepsilon\rightarrow 0$ of the above equation is a constant and uniform solution.
The first-order AP scheme detailed in the previous Section becomes
\begin{equation}
    \label{eq:first_order_model}
    w_j^{n+1} = w_j^n
              - \frac {\dt} {\dx} c_e \lp w_j^n - w_{j-1}^n \rp
              - \frac {\dt} {\dx} \frac {c_i} {\sqrt{\varepsilon}} \lp w_j^{n+1} - w_{j-1}^{n+1} \rp.
\end{equation}
The following results hold
\begin{lemma}\label{Prop_first_order_scheme}
For periodic boundary conditions $w^n_0=w^n_L$ and $w^n_{L+1}=w^n_1$ for all $n\geq 0$ and if the following uniform C.F.L. holds true
\begin{equation}
    \label{eq:classical_CFL_condition_model_problem}
    \dt \leq \frac {\dx} {c_e}.
\end{equation}
Then, scheme (\ref{eq:first_order_model}) is Asymptotic Preserving and asymptotically $L^2$- and $L^\infty$-stable, that is
$$    \left \| w^{n+1} \right \|_2 =\lp\sum_{j=1}^L|w_j^{n+1}|^2\rp^{1/2}\leq \left \| w^n \right \|_2
    \text{\qquad and \qquad}
    \left \| w^{n+1} \right \|_\infty =\max_{j=1}^L|w_j^{n+1}|\leq \left \| w^n \right \|_\infty.$$
and TVD, that is
$$    TV(w^{n+1}) \leq TV(w^n)= \sum_{j=1}^L \left| w_{j+1}^n - w_j^n \right|.$$
\end{lemma}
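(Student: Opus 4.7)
The plan is to rewrite the scheme in a compact form and then verify each property separately using arguments local to the stencil plus periodicity. Introduce the dimensionless numbers $a = c_e \dt/\dx$ and $b = (c_i/\sqrt{\varepsilon})\, \dt/\dx$, so that~(\ref{eq:first_order_model}) becomes
\begin{equation*}
(1+b)\, w_j^{n+1} - b\, w_{j-1}^{n+1} = (1-a)\, w_j^n + a\, w_{j-1}^n.
\end{equation*}
The CFL condition~(\ref{eq:classical_CFL_condition_model_problem}) gives $0 \leq a \leq 1$, and by construction $b \geq 0$, independently of $\varepsilon$. This is the only place where~(\ref{eq:classical_CFL_condition_model_problem}) is used; the coefficient $b$ is allowed to be arbitrarily large, which is what makes the estimates uniform in $\varepsilon$.

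For the AP property, I would divide the compact form by $b$ and let $\varepsilon \to 0$ (so $b \to \infty$); this forces $w_j^{n+1} - w_{j-1}^{n+1} = 0$, meaning the discrete solution is $j$-independent, which is the correct discretization of the formal limit. For $L^\infty$ stability, I would use the standard maximum-principle argument: pick $j^\star$ realizing $M = \max_j w_j^{n+1}$, so that $w_{j^\star-1}^{n+1} \leq M$, and write
\begin{equation*}
M = (1+b) M - b M \leq (1+b) w_{j^\star}^{n+1} - b w_{j^\star-1}^{n+1} = (1-a) w_{j^\star}^n + a w_{j^\star-1}^n \leq \|w^n\|_\infty,
\end{equation*}
the last inequality being a convex combination since $0 \leq a \leq 1$. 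The same argument applied to $-w$ yields the lower bound, giving $\|w^{n+1}\|_\infty \leq \|w^n\|_\infty$.

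For the TVD property, subtract the compact form written at index $j+1$ from that written at index $j$, so that the differences $\Delta_j^k := w_{j+1}^k - w_j^k$ satisfy the same relation:
\begin{equation*}
(1+b)\, \Delta_j^{n+1} - b\, \Delta_{j-1}^{n+1} = (1-a)\, \Delta_j^n + a\, \Delta_{j-1}^n.
\end{equation*}
Applying the triangle inequality, summing on $j = 1, \ldots, L$, and using periodicity (so that $\sum_j |\Delta_{j-1}^{n+1}| = \sum_j |\Delta_j^{n+1}|$ and likewise for the $n$ terms) yields $(1+b)\, TV(w^{n+1}) \leq b\, TV(w^{n+1}) + TV(w^n)$, which simplifies to $TV(w^{n+1}) \leq TV(w^n)$.

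For $L^2$ stability, I would multiply the compact form by $w_j^{n+1}$ and sum in $j$. Using Young's inequality $2xy \leq x^2 + y^2$ on the cross product $w_{j-1}^{n+1} w_j^{n+1}$ together with periodicity gives
\begin{equation*}
b \sum_j w_{j-1}^{n+1} w_j^{n+1} \leq b \, \|w^{n+1}\|_2^2,
\end{equation*}
so the left-hand side is bounded below by $\|w^{n+1}\|_2^2$. On the right-hand side, Cauchy--Schwarz combined with $(1-a) + a = 1$ yields $\|w^n\|_2 \, \|w^{n+1}\|_2$. Dividing by $\|w^{n+1}\|_2$ (trivial if it is zero) concludes $\|w^{n+1}\|_2 \leq \|w^n\|_2$. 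None of these steps is an obstacle in itself; the only delicate point is that the implicit coefficient $b$ must not enter the CFL restriction, and this is precisely what the cancellation between the two $b$-terms on the left-hand sides of the $L^\infty$, TVD and $L^2$ arguments ensures.
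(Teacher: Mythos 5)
Your proof is correct, and for the one property the paper actually proves in detail --- the TVD bound --- your argument is the same as the paper's: write the scheme for the differences $\Delta_j = w_{j+1}-w_j$, apply the reverse triangle inequality $(1+b)|\Delta_j^{n+1}| - b|\Delta_{j-1}^{n+1}| \leq |(1+b)\Delta_j^{n+1} - b\Delta_{j-1}^{n+1}|$, sum over $j$ using periodicity so the two implicit contributions cancel, and use $0\leq a\leq 1$ on the explicit side. For the remaining properties the paper simply defers to the earlier reference \cite{DimLouVig} and omits the proofs; you supply self-contained and correct arguments (a discrete maximum principle at the index realizing the max for $L^\infty$, and an energy estimate combining Young's inequality on the implicit cross term with Cauchy--Schwarz on the right-hand side for $L^2$), which is a welcome addition rather than a deviation. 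The only point worth tightening is the AP part: letting $b\to\infty$ shows the limit solution is spatially constant, but to identify the constant (and hence consistency with the limit equation, whose solution is constant in time as well) you should also sum the scheme over $j$ and use periodicity to see that $\sum_j w_j^{n+1} = \sum_j w_j^n$, so the preserved constant is the mean of the data; this is a one-line addition.
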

\begin{proof}
The proofs of asymptotically consistency and of the $L^2$- and $L^\infty$-stabilities can be easily established following the results proved in \cite{DimLouVig} and we omit them. It remains to prove the TVD property. Using~(\ref{eq:first_order_model}) and the periodic boundary conditions, we have for all $j=1,\cdots,L$
$$\displaylines{\lp w^{n+1}_{j+1}-w^{n+1}_j\rp\, \lp1+\frac{c_i\, \Delta t}{\sqrt{\eps}\, \Delta x}\rp
-\frac{c_i\, \Delta t}{\sqrt{\eps}\, \Delta x}\, \lp w_j^{n+1}-w^{n+1}_{j-1}\rp   =\lp w^n_{j+1}-w^n_j\rp\,\lp1- \frac {c_e \,\dt} {\dx} \rp + \frac {c_e \,\dt} {\dx}\,\lp w^n_j-w^n_{j-1}\rp.}$$
Taking the absolute value and remarking that for all $a$, $b$ real numbers, $|a|-|b|\leq |a-b|$, summing for all $j=1,\cdots,L$ and
using the periodic boundary conditions and the C.F.L. condition~(\ref{eq:classical_CFL_condition_model_problem}), we obtain
$$\displaylines{
\sum_{j=1}^L\left|w^{n+1}_{j+1}-w^{n+1}_j\right|=\lp1+\frac{c_i\, \Delta t}{\sqrt{\eps}\, \Delta x}\rp\sum_{j=1}^L\left|w^{n+1}_{j+1}-w^{n+1}_j\right|
-\frac{c_i\, \Delta t}{\sqrt{\eps}\, \Delta x}\, \sum_{j=1}^L\left| w_j^{n+1}-w^{n+1}_{j-1}\right|\hfill\cr
\leq \sum_{j=1}^L\left|\lp w^{n+1}_{j+1}-w^{n+1}_j\rp\, \lp1+\frac{c_i\, \Delta t}{\sqrt{\eps}\, \Delta x}\rp
-\frac{c_i\, \Delta t}{\sqrt{\eps}\, \Delta x}\, \lp w_j^{n+1}-w^{n+1}_{j-1}\rp\right|\cr
\hfill   \leq\lp1- \frac {c_e \,\dt} {\dx} \rp\sum_{j=1}^L\left|w^n_{j+1}-w^n_j\right| + \frac {c_e \,\dt} {\dx}\sum_{j=1}^L\left| w^n_j-w^n_{j-1}\right|=\sum_{j=1}^L\left|w^n_{j+1}-w^n_j\right|.
}$$ \end{proof}
We turn now our attention to the two-step ARS(2,2,2) second-order time discretization, we refer to it to as second order AP-scheme. This reads
\begin{subequations}
    \label{eq:ARS222_model}
    \begin{align}
        \label{eq:first_step_ARS222_model}
        &w_j^\star = w_j^n
               - \beta c_e \frac \dt \dx \lp w_j^n - w_{j-1}^n \rp
               - \beta \frac {c_i} {\sqrt{\varepsilon}} \frac \dt \dx \lp w_j^\star - w_{j-1}^\star \rp, \\
        \label{eq:second_step_ARS222_model}
        &\begin{aligned}
        w_j^{n+1} = w_j^n
                 &- ( \beta - 1 ) c_e \frac \dt \dx \lp w_j^n - w_{j-1}^n \rp
                  - ( 1 - \beta ) \frac {c_i} {\sqrt{\varepsilon}} \frac \dt \dx \lp w_j^\star - w_{j-1}^\star \rp \\
                 &- ( 2 - \beta ) c_e \frac \dt \dx \lp w_j^\star - w_{j-1}^\star \rp
                  - \beta         \frac {c_i} {\sqrt{\varepsilon}} \frac \dt \dx \lp w_j^{n+1} - w_{j-1}^{n+1} \rp.
        \end{aligned}
    \end{align}
\end{subequations}
\begin{proposition}
    \label{thm:ARS_is_AP}
    For periodic boundary conditions, the two-step scheme (\ref{eq:ARS222_model}) is asymptotically consistent:
    if $w_j^0=\overline w$ for all $j=1,\cdots,L$, then for all $n\geq 0$, $w_j^n=\overline w$,  for all $j=1,\cdots,L$.
\end{proposition}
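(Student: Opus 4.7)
The plan is to argue by induction on $n$, exploiting the structural property that each stage of (\ref{eq:ARS222_model}) involves $w^n$, $w^\star$, and $w^{n+1}$ only through forward differences of the form $w_j - w_{j-1}$. Whenever every such difference vanishes, the stage collapses to a trivial implicit periodic system whose constant state is a solution; the only thing to verify is that this constant is in fact the unique solution.

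Starting from the inductive hypothesis $w_j^n = \overline w$ for every $j = 1, \dots, L$, every explicit increment in (\ref{eq:first_step_ARS222_model}) vanishes identically, so the first stage reduces to
\begin{equation*}
(1 + \mu)\, w_j^\star - \mu\, w_{j-1}^\star = \overline w, \qquad j = 1, \dots, L, \qquad \mu := \beta\, \frac{c_i}{\sqrt{\varepsilon}}\, \frac{\dt}{\dx} > 0,
\end{equation*}
coupled with periodic boundary conditions. The constant $w_j^\star \equiv \overline w$ is plainly a solution. Exactly the same reduction then applies to (\ref{eq:second_step_ARS222_model}): once $w^n$ and $w^\star$ are both equal to $\overline w$, all four explicit difference terms drop out and $w^{n+1}$ obeys the same implicit periodic system, yielding $w_j^{n+1} \equiv \overline w$ and closing the induction.

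The only non-routine point, and thus the main obstacle I would address in full, is ruling out spurious non-constant solutions of the implicit solve. For this a short discrete maximum-principle argument suffices: if $v$ denotes the difference of two solutions, then $(1+\mu)|v_j| \leq \mu |v_{j-1}|$ for all $j$, so taking the maximum over $j$ gives $\| v \|_\infty \leq \frac{\mu}{1+\mu} \| v \|_\infty$, which forces $v \equiv 0$ because $\mu/(1+\mu) < 1$. Equivalently, one can observe that the associated circulant matrix $(1+\mu)\, I - \mu\, S_{\mathrm{per}}$ has eigenvalues $(1+\mu) - \mu\, e^{2 i \pi k / L}$, none of which vanish for $\mu > 0$. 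With well-posedness of the implicit stage in hand, the induction yields $w_j^n = \overline w$ for every $n \geq 0$ and every $j$, which is exactly the asymptotic consistency claimed.
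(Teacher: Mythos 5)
Your proof is correct for the implication as literally stated, but it takes a genuinely different route from the paper's. You work at fixed $\varepsilon>0$: constant data kills every explicit difference, and you then establish well-posedness of the implicit periodic solve (via diagonal dominance of $(1+\mu)I-\mu S_{\mathrm{per}}$, or its circulant eigenvalues $(1+\mu)-\mu e^{2i\pi k/L}$, all of modulus at least $1$) to conclude that the constant is the \emph{unique} solution of each stage. The paper instead multiplies both stages by $\sqrt{\varepsilon}$ and passes to the limit $\varepsilon\to 0$: the stiff terms alone then force $w_j^\star=w_1^\star$ and $w_j^{n+1}=w_1^{n+1}$ \emph{irrespective of the explicit differences}, and the value of the constant is identified by summing the update over $j$ (the differences telescope under periodicity, giving discrete conservation of the mean). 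Your argument buys a stronger statement — exact preservation of constants for every $\varepsilon$, with the invertibility of the implicit stage made explicit, a point the paper leaves tacit — and it is the cleaner proof of the displayed implication. What it does not capture is the mechanism the paper is really after under the name ``asymptotic consistency'': that in the limit the scheme projects the stages onto spatially constant states even for non-constant data, with the constant fixed by conservation. That is the argument that transfers to the Euler system (where the analogue of ``multiply by $\varepsilon$ and pass to the limit'' yields $\nabla p(\rho^\star)=0$ and integration over the domain fixes the constant density), so if you intend your proof as a template for the nonlinear case, the limiting argument is the one to internalize.
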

\begin{proof}
Multiplying, both equations of~(\ref{eq:ARS222_model}), an passing to the limit $\varepsilon\rightarrow 0$ yields
$w^{\star}_j=w^{\star}_1$ and $w^{n+1}_j=w^{n+1}_1$ for for all $j=1,\cdots,L$.
Now summing~(\ref{eq:second_step_ARS222_model}) for $j=1,\cdots,L$, we obtain by induction
$w^{n+1}_j=w^{n+1}_1=w^{n}_1=\cdots=\overline{w}$.
\end{proof}
Concerning the $L^2$ stability of the scheme (\ref{eq:ARS222_model}), we can prove the following result
\begin{proposition}
    \label{thm:L2_stability}
    For periodic boundary conditions, the scheme (\ref{eq:ARS222_model}) is $L^2$-stable under the C.F.L. condition (\ref{eq:classical_CFL_condition_model_problem}).
\end{proposition}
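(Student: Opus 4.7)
Since scheme~(\ref{eq:ARS222_model}) is linear with constant coefficients and periodic boundary conditions, by Parseval's identity $L^2$-stability is equivalent to showing that the amplification factor of every Fourier mode has modulus at most one. My plan is therefore to perform a von~Neumann analysis. Substituting the ansatz $w_j^n = \hat{w}^n e^{i j \xi}$ with $\xi \in [-\pi,\pi]$, and similarly for $w_j^\star$ and $w_j^{n+1}$, and introducing $a = c_e\dt/\dx$, $b = c_i \dt/(\sqrt{\eps}\,\dx)$ and $z = 1 - e^{-i\xi}$, the first stage~(\ref{eq:first_step_ARS222_model}) yields $\hat{w}^\star = G_1(\xi)\,\hat{w}^n$ with $G_1(\xi) = (1 - \beta a z)/(1 + \beta b z)$, and inserting this in~(\ref{eq:second_step_ARS222_model}) produces the overall amplification factor
\[
    G(\xi) = \frac{\bigl[1 + (1-\beta)\,a z\bigr](1 + \beta b z) - \bigl[(1-\beta)\,b z + (2-\beta)\,a z\bigr](1 - \beta a z)}{(1 + \beta b z)^2}.
\]
It then suffices to prove $|G(\xi)|^2 \leq 1$ for every $\xi \in [-\pi,\pi]$, every $b\geq 0$, and every $a \in [0,1]$.

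To establish this bound I would clear denominators and write $|G|^2 \leq 1$ as $|N|^2 \leq |D|^2$, with $N$ the numerator above and $D = (1+\beta b z)^2$. Both $|N|^2$ and $|D|^2$ become polynomials in the real variable $s = 1 - \cos\xi \in [0,2]$, through $|z|^2 = 2s$ and $\mathrm{Re}(z) = s$. Exploiting the identities $\beta(2-\beta) = (1-\beta)^2 = 1/2$ and $2\beta(1-\beta) = \sqrt{2}-1$ specific to $\beta = 1 - \sqrt{2}/2$, the numerator simplifies to $N = (1-az)\bigl(1 - (\sqrt{2}-1)b z\bigr) + \tfrac{1}{2}(az)^2$. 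I would then expand $|D|^2 - |N|^2$ as a polynomial in $s$ with coefficients depending on $a$ and $b$, and verify that it is non-negative on $[0,2]$ under the sole hypothesis $a \leq 1$, treating $b\geq 0$ as a free parameter.

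The main obstacle is precisely this last step: the inequality must hold uniformly in the possibly large stiff parameter $b$, so the dominant $b$-powers have to cancel and the subdominant ones must carry the right sign. As sanity checks, in the regime $b \to \infty$ the implicit damping is so strong that $G(\xi)\to 0$, making the inequality trivial; while for $b = 0$ the scheme reduces to a purely explicit upwind ARS(2,2,2) discretization of $\pt w + c_e \px w = 0$, whose $L^2$-stability under $a \leq 1$ is classical. The real work is to interpolate between these two extremes through the algebraic identity above. Should the Fourier computation prove too cumbersome, a natural fallback is a discrete energy argument: multiply~(\ref{eq:first_step_ARS222_model}) by $w_j^\star$ and~(\ref{eq:second_step_ARS222_model}) by $w_j^{n+1}$, sum over $j$, use summation by parts with the periodic boundary conditions, and absorb the remaining explicit cross-terms via Young's inequality using~(\ref{eq:classical_CFL_condition_model_problem}).
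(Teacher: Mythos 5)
Your proposal follows exactly the paper's route: a von~Neumann analysis yielding the same stage and overall amplification factors (your $G_1$ and $G$ are the paper's $f_k$ and $g_k$ under the identifications $z=1-c+is$, $a=\sigma_e$, $b=\sigma_i^\eps$), and your rewriting $N=(1-az)\bigl(1-(\sqrt{2}-1)bz\bigr)+\tfrac{1}{2}(az)^2$ of the numerator is algebraically correct. The one step you defer --- verifying $|G|^2\leq 1$ uniformly in $b\geq 0$ under $a\leq 1$ --- is precisely the step the paper itself does not complete in closed form either: after observing that $|g_k|^2$ has a finite limit as $\sigma_i^\eps\to\infty$, the authors establish the bound by plotting $|g_k|^2$ over the compactified parameter domains, so your planned polynomial-in-$s$ positivity check would, if actually carried out, be more rigorous than the published argument rather than a departure from it.
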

\begin{proof}
Using Fourier analysis and setting
 $w^{n+1,\star,n}_j=\sum_k\hat w^{n+1,\star,n}_k\, e^{i\, k\, j\,\Delta x}$, we obtain that $w^\star_k=f_k\, w^n_k$
 and $w^{n+1}_k=g_k\, w^n_k$, where $f_k=(1-\beta\, \sigma_e\,(1-c+i\, s))/(1+\beta\,\sigma_i^\eps\, (1-c+i\, s))$ with $\sigma_e=\frac{c_e\, \Delta t}{\Delta x}$,
 $\sigma_i^\eps=\frac{c_i\, \Delta t}{\eps\, \Delta x}$, $c=\cos(k\,\Delta x)$, $s=\sin(k\,\Delta x)$,  and where
$$g_k=\frac{1-(\beta-1)\,\sigma_e\, (1-c+i\,s)}{1+\beta\,\sigma_i^\eps\, (1-c+i\, s)}-
 \frac{\Bigl((1-\beta)\,\sigma_i^\eps+(2-\beta)\sigma_e\Bigl)\, (1-c+i\, s)\,\,(1-\beta\,\sigma_e\, (1-c+i\, s))}{(1+\beta\,\sigma_i^\eps\, (1-c+i\, s))^2}.$$
Remarking that $x\,(1-x)\in[0,1/4]$ for all $x\in[0,1]$, we easily obtain that
under the condition $\beta\, \sigma_e=\beta\, \frac{c_e\, \Delta t}{\Delta x}\leq 1$, we have $ |f_k|^2\leq 1-2\, (\beta\sigma_e)\, (1-c)\, (1-2\,\beta\,\sigma_e)\leq 1$,
 for all $\sigma_i^\eps\geq 0$ and $c\in[-1,1]$.
 Furthermore, an easy calculation shows that $|g_k|^2$ depends only on $s^2$ and has a finite limit when $\sigma_i^\eps\rightarrow +\infty$.
 Then, setting $\sigma_e=1$ and plotting the function $(c,\sigma_i^\eps)\mapsto |g_k|^2$ for $c\in[-1,1]$ and $\sigma_i^{\eps}\in [0,1]$ and
setting $\sigma_e=1$, $\mu_i^\eps=1/\sigma_i^\eps$ and plotting the function $(c,\mu_i^\eps)\mapsto |g_k|^2$ for $c\in[-1,1]$ and $\mu_i^{\eps}\in [0,1]$, we prove that,
for all $\sigma_i^\eps\geq 0$ and $c\in[-1,1]$
 $$\sigma_e=\frac{c_e\, \Delta t}{\Delta x}\leq 1\quad\Rightarrow \quad |g_k|^2\leq 1.$$
\end{proof}
On the other hand, the above scheme is not uniformly $L^\infty$-stable and nor uniformly TVD. Let us see it with a counterexample.
We consider the following initial data on the space domain $[0,1]$
\begin{equation}
    \label{eq:initial_data_advection_step}
    w(0,x) =
    \left\{\begin{array}{ll}
          \varepsilon & \text{if } 0.25 < x \leq 0.75, \\
        - \varepsilon & \text{otherwise.}
    \end{array}\right.
\end{equation}
and on Figure~\ref{fig:advection_step_function}, we display the results of the first-order AP scheme~(\ref{eq:first_order_model}) and the second-order AP scheme~(\ref{eq:ARS222_model}) for two different values of the Mach number using the non-restrictive C.F.L. condition (\ref{eq:classical_CFL_condition_model_problem}) and periodic boundary conditions.
The results are the following: the first-order AP scheme is in-bounds but diffusive, the second-order AP scheme produces bounded spurious oscillations when the time step violates the explicit C.F.L. condition $\Delta x/(c_e+c_i/\sqrt{\varepsilon})$, thus preventing it from being $L^\infty$-stable or TVD independently on $\varepsilon$.
\begin{figure}[!ht]
    \centering
    \includegraphics[height=0.3\textwidth]{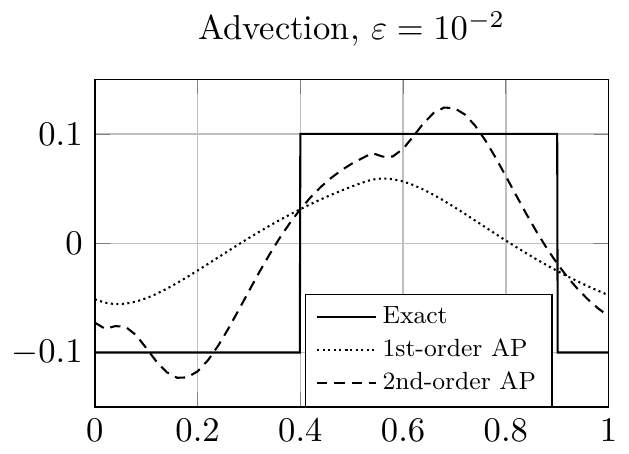} \qquad%
    \includegraphics[height=0.3\textwidth]{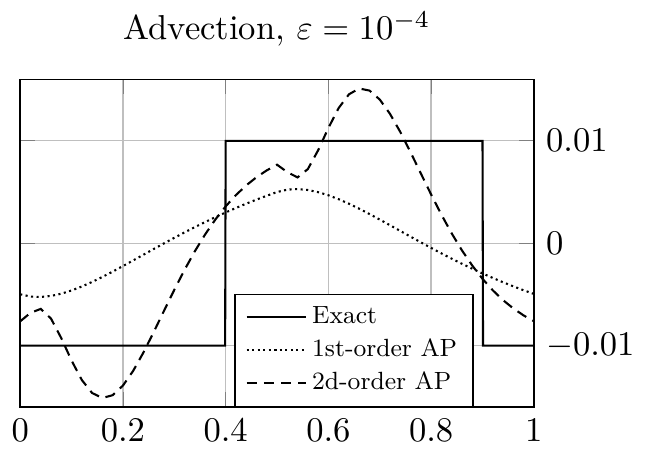}

    \caption{Advection with equation (\ref{eq:model_problem}) of the rectangular pulse~(\ref{eq:initial_data_advection_step}).
             (left panel: $\varepsilon = 10^{-2}$; right panel:~$\varepsilon = 10^{-4}$).
             Comparison of first-order AP scheme (\ref{eq:first_order_model}) (dotted line)
             and the second-order AP scheme (\ref{eq:ARS222_model}) (dashed line)
             against the exact solution (solid line).
             }
    \label{fig:advection_step_function}
\end{figure}
This loss of stability for the case of an implicit-explicit second order scheme shares many similarities with a negative result \cite{GotShuTad} proved in the case of sole implicit high order Runge-Kutta time discretizations that we recall here
\begin{theorem}(\cite{GotShuTad})
    There does not exist TVD implicit Runge-Kutta schemes with unconstrained time steps of order higher than one.
\end{theorem}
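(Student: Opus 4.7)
The plan is to argue by contradiction: assume an implicit Runge--Kutta scheme of order $p\ge 2$ is TVD for arbitrarily large time steps, and then exhibit a test configuration for which TVD must fail when $\dt$ is chosen large enough. I would carry this out in four steps.

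First, I would reduce to a linear scalar test. Consider the scalar advection $\pt w + \px w = 0$ on the torus, discretized in space by the first-order upwind operator $L$ (a circulant matrix). Then any $s$-stage Runge--Kutta method applied to $w'(t) = Lw(t)$ takes the form $w^{n+1} = R(\dt\, L) w^n$, where $R$ is the rational stability function of the scheme. Accuracy of order $p$ fixes the Taylor expansion $R(z) = 1 + z + z^2/2 + \cdots + z^p/p! + O(z^{p+1})$, while the implicit character of the scheme makes $R$ a genuine rational function $R = P/Q$ with $\deg P \le \deg Q$.

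Second, I would translate TVD into a coefficient positivity condition. Writing $\dt L = -\sigma(I-S)$, where $S$ is the shift and $\sigma = \dt/\dx$, one checks that $R(\dt L)$ becomes a rational function of the shift operator, and, after expanding by the geometric/partial-fraction machinery used in the Shu--Osher framework, a (formal) power series $\sum_k \alpha_k(\sigma) S^k$. For the update $w^{n+1} = \sum_k \alpha_k(\sigma) S^k w^n$ to be TVD for every initial datum, a classical result (Harten's lemma, or the convex-combination characterization used in SSP theory) forces all $\alpha_k(\sigma)$ to be nonnegative, with $\sum_k \alpha_k(\sigma) = 1$ obtained automatically from $R(0)=1$.

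Third, I would extract the obstruction from the pole structure of $R$. Because $R$ has order $\ge 2$, one has $R'(0)=1$ and $R''(0)=1$, so $R$ cannot be a pure implicit-Euler-type factor $1/(1-z/m)^k$: at least one of its poles must be complex, or several distinct real poles must appear, each contributing oscillatory or non-monotone terms of the form $\sigma^j/(1+\sigma/m)^k$ in the partial-fraction decomposition. Evaluating $R(-\sigma(1-\zeta))$ on the unit circle $\zeta = e^{i\theta}$ shows that, as $\sigma \to \infty$, the Fourier symbol tends to $R(\infty)$, which, for an order-$\ge 2$ rational $R$, must satisfy $|R(\infty)| \neq $ a specific value compatible with a nonnegative kernel $\{\alpha_k\}$. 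This produces sign changes in the $\alpha_k(\sigma)$ for $\sigma$ large enough.

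The main obstacle — and what takes the bulk of the work in the original argument — is step three: rigorously converting the order-$p\ge 2$ Taylor constraints at $z=0$ into an obstruction at $z=-\infty$ that rules out coefficient positivity of $R(-\sigma(I-S))$ uniformly in $\sigma$. Once the sign change is located, one closes the argument by choosing a monotone step-function initial datum, for which a single negative coefficient $\alpha_k(\sigma^\ast) < 0$ introduces a new local extremum and therefore strictly increases the total variation, contradicting the TVD assumption and completing the proof.
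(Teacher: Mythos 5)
First, note that the paper does not prove this statement at all: it is quoted verbatim from Gottlieb, Shu and Tadmor \cite{GotShuTad} and used only as motivation for the convex-combination construction that follows, so there is no in-paper proof to compare against. Judged on its own terms, your skeleton is the standard one in the SSP literature: reduce to linear advection with first-order upwind differencing so that the update is $w^{n+1}=R(\dt\,L)w^n$ with $R$ the rational stability function; use the fact that a linear constant-coefficient scheme $w^{n+1}_j=\sum_k\alpha_k(\sigma)w^n_{j-k}$ is TVD for all data if and only if all $\alpha_k(\sigma)\ge 0$; and conclude by exhibiting a step function whose total variation increases once some coefficient goes negative. Steps one, two and four are sound.

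The genuine gap is your step three, which you yourself identify as carrying the bulk of the work but do not actually carry out, and the specific obstructions you invoke there are not the right ones. There is no distinguished value of $R(\infty)$ that is ``compatible with a nonnegative kernel'' (backward Euler has $R(\infty)=0$ and is unconditionally TVD), and a second-order rational $R$ need not have complex poles or several distinct real poles, so neither observation rules anything out. The correct completion runs through absolute monotonicity: nonnegativity of all $\alpha_k(\sigma)$ for every $\sigma>0$ says that $R(-\sigma(1-\zeta))$ has nonnegative power-series coefficients in $\zeta$ for every $\sigma$, hence $R$ is absolutely monotonic on all of $(-\infty,0]$; a rational function with this property admits a representation $R(z)=\sum_j c_j\lp 1-z/m_j\rp^{-k_j}$ with $c_j\ge 0$, $m_j>0$, $\sum_j c_j=R(0)=1$ (a Bernstein-type representation, cf. the radius-of-absolute-monotonicity theory of Kraaijevanger and Spijker). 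Then $R'(0)=\sum_j c_j k_j/m_j$ and $R''(0)=\sum_j c_j k_j(k_j+1)/m_j^2$, and Cauchy--Schwarz gives $1=\lp R'(0)\rp^2\le\sum_j c_j k_j^2/m_j^2<R''(0)$, which contradicts the second-order condition $R''(0)=1$. Without this (or an equivalent) argument, your proposal sets up the problem but does not prove the theorem.
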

To tackle this problem and obtain a TVD numerical scheme that is more accurate than a first-order discretization,
we propose to introduce a convex combination between a first-order implicit-explicit scheme and the IMEX ARS discretization, as follows:
$$w^{n+1}_j=\theta\, w^{n+1,O1}_j+(1-\theta)\, w^{n+1,O2}_j,$$
where $w^{n+1,O1}_j$ is given by the first-order AP scheme~(\ref{eq:first_order_model}), $w^{n+1,O2}_j$ by the second-order AP one~(\ref{eq:ARS222_model}) and $\theta\in[0,1]$.
The spirit is the same of high resolution methods which employ the so-called flux limiter approach \cite{LeV2} for constructing high order TVD schemes. Since, as in the case of high order space discretizations, it is not possible to avoid spurious oscillations, we couple high order discretizations with first order ones and in cases in which the TVD property is violated we come back to the first order discretization which assures monotonicity.
This approach gives the following limited scheme
\begin{subequations}
    \label{eq:theta_model}
    \begin{align}
        \label{eq:theta_model_first_step}
        &w_j^\star = w_j^n
               - \beta c_e \frac \dt \dx \lp w_j^n - w_{j-1}^n \rp
               - \beta \frac {c_i} {\sqrt{\varepsilon}} \frac \dt \dx \lp w_j^\star - w_{j-1}^\star \rp, \\
        \label{eq:theta_model_second_step}
        &\begin{aligned}
            w_j^{n+1} = w_j^n
                     &- \theta ( \beta - 1 )                              c_e \frac \dt \dx \lp w_j^n     - w_{j-1}^n     \rp
                      - \theta ( 1 - \beta ) \frac {c_i} {\sqrt{\varepsilon}} \frac \dt \dx \lp w_j^\star     - w_{j-1}^\star     \rp \\
                     &- \theta ( 2 - \beta )                              c_e \frac \dt \dx \lp w_j^\star     - w_{j-1}^\star     \rp
                      - \theta \beta         \frac {c_i} {\sqrt{\varepsilon}} \frac \dt \dx \lp w_j^{n+1} - w_{j-1}^{n+1} \rp \\
                     &- ( 1 - \theta )                                    c_e \frac \dt \dx \lp w_j^n     - w_{j-1}^n     \rp
                      - ( 1 - \theta )       \frac {c_i} {\sqrt{\varepsilon}} \frac \dt \dx \lp w_j^{n+1} - w_{j-1}^{n+1} \rp.
        \end{aligned}
    \end{align}
\end{subequations}
For which the following results hold true:
\begin{theorem}
    \label{thm:AP_theta_scheme}
    With periodic boundary conditions, scheme (\ref{eq:theta_model}) is asymptotically consistent.
\end{theorem}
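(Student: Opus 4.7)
The plan is to mimic the proof of Proposition~\ref{thm:ARS_is_AP} closely, adapting it to the convex-combination scheme~(\ref{eq:theta_model}). As in Proposition~\ref{thm:ARS_is_AP}, I interpret ``asymptotically consistent'' to mean: if $w_j^0 = \overline{w}$ for all $j = 1,\dots,L$, then passing $\varepsilon \to 0$ in each stage yields $w_j^n = \overline{w}$ for all $n\geq 0$ and all $j$.

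First I would analyse the first stage~(\ref{eq:theta_model_first_step}). Since it is identical to~(\ref{eq:first_step_ARS222_model}), multiplying by $\sqrt{\varepsilon}$ and letting $\varepsilon \to 0$ kills the explicit terms and the left-hand side, leaving $\beta c_i \tfrac{\Delta t}{\Delta x}(w_j^\star - w_{j-1}^\star) = 0$. By periodicity this forces $w_j^\star = w_1^\star$ for every $j$. I would then do the same manipulation on the second stage~(\ref{eq:theta_model_second_step}): the only surviving terms are those carrying $1/\sqrt{\varepsilon}$, namely
\[
\theta(1-\beta)\,(w_j^\star - w_{j-1}^\star) \;+\; \bigl(\theta\beta + (1-\theta)\bigr)\,(w_j^{n+1} - w_{j-1}^{n+1}) \;=\; 0.
\]
Using the result from the first stage, the first difference vanishes, and since $\beta = 1 - \sqrt{2}/2 \in (0,1)$ and $\theta \in [0,1]$ guarantee $\theta\beta + (1-\theta) > 0$, I conclude $w_j^{n+1} = w_1^{n+1}$ for all $j$.

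Next I would sum~(\ref{eq:theta_model_second_step}) over $j = 1,\dots,L$. Every term is a telescoping sum of nearest-neighbour differences, which vanishes under periodic boundary conditions, leaving $\sum_j w_j^{n+1} = \sum_j w_j^n$. Combining this conservation identity with the spatial constancy $w_j^{n+1} = w_1^{n+1}$ obtained above gives $L\, w_1^{n+1} = \sum_j w_j^n$. An induction starting from $w_j^0 = \overline{w}$ then yields $w_j^n = \overline{w}$ for all $n$ and $j$, as in the proof of Proposition~\ref{thm:ARS_is_AP}.

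The only delicate point is verifying that $\theta\beta + (1-\theta)$ does not vanish so that one can divide by it and recover $w_j^{n+1} = w_1^{n+1}$; this is immediate from the ranges of $\theta$ and $\beta$, so no real obstacle arises. Thus the entire argument reduces to: (i) dominant-balance in $1/\sqrt{\varepsilon}$ for each stage, (ii) telescoping via periodicity to get conservation of the cell average, (iii) induction. I would keep the presentation short by explicitly referring the reader to the structure of the proof of Proposition~\ref{thm:ARS_is_AP}.
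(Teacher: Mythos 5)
Your proof is correct, but it takes a genuinely different route from the paper. The paper's proof is a one-liner: it observes that scheme~(\ref{eq:theta_model}) arises as a convex combination of the first-order scheme~(\ref{eq:first_order_model}) and the ARS scheme~(\ref{eq:ARS222_model}), both already shown to be asymptotically consistent (Lemma~\ref{Prop_first_order_scheme} and Proposition~\ref{thm:ARS_is_AP}), and concludes that the combination inherits the property. You instead rerun the dominant-balance argument of Proposition~\ref{thm:ARS_is_AP} directly on the blended scheme: multiply each stage by $\sqrt{\varepsilon}$, extract the $1/\sqrt{\varepsilon}$ terms to get spatial constancy of $w^\star$ and $w^{n+1}$ (your check that $\theta\beta + 1 - \theta = 1 - \theta(1-\beta) > 0$ is the right nontrivial point), then telescope and induct. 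What your approach buys is self-containedness and, arguably, rigor: as written, (\ref{eq:theta_model_second_step}) couples the implicit terms through the \emph{single} unknown $w^{n+1}$ rather than through two separately computed solutions $w^{n+1,O1}$ and $w^{n+1,O2}$, so the scheme is not literally the convex combination $\theta\, w^{n+1,O1} + (1-\theta)\, w^{n+1,O2}$ of the two updates, and your direct verification covers the scheme as actually stated. What the paper's argument buys is brevity and reuse of the two preceding consistency results. Both arguments are sound under the same standing assumption (discrete quantities admit limits as $\varepsilon \to 0$), which you should state explicitly as the paper does elsewhere.
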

\begin{proof}
    The scheme (\ref{eq:theta_model}) is a result of a convex combination of two asymptotically consistent schemes (\ref{eq:first_order_model}) and (\ref{eq:ARS222_model}).
    Therefore, it is also asymptotically consistent.
\end{proof}
\begin{theorem}
    \label{thm:TVD_scheme}
    The scheme (\ref{eq:theta_model}) is asymptotically stable, i.e. uniformly TVD and $L^\infty$-stable:
$$ \left \| w^{\star} \right \|_\infty \leq \left \| w^n \right \|_\infty,\quad\quad\left \| w^{n+1} \right \|_\infty \leq \left \| w^n \right \|_\infty,\quad\quad TV(w^{\star}) \leq TV(w^n),\quad\quad TV(w^{n+1}) \leq TV(w^n),$$
if the following uniform C.F.L. conditions
    \begin{equation}
        \label{eq:bound_on_theta_and_CFL}
        (1-\alpha)\,\frac{ c_e \, \dt}{\dx}\leq 1 - \alpha,\quad\quad
 \alpha\, \frac{ c_e \, \dt}{\dx}\leq \alpha\,\frac{1}{ \frac{\beta}{ 1 - \beta } ( 2 - \beta )}=\alpha\,\sqrt{2}.
    \end{equation}
are verified for $\alpha\in[0,1]$ and $\theta =\alpha\, \frac \beta { 1 - \beta } = \alpha\, (\sqrt{2} - 1)\in]0,1[,$
\end{theorem}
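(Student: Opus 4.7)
The plan is to handle the two-step scheme in the same spirit as Lemma~\ref{Prop_first_order_scheme}: reduce each stage to an update with non-negative coefficients, then apply the triangle inequality (for TV) and the discrete maximum principle (for $L^\infty$). The main obstacle is the term $-\alpha\beta B\,(w_j^\star - w_{j-1}^\star)$ appearing on the right-hand side of the corrector (\ref{eq:theta_model_second_step}): with $B = c_i \dt/(\sqrt{\eps}\dx)$ this coefficient is unbounded as $\eps \to 0$, and would immediately spoil any naive bound by $TV(w^\star)$ or $\|w^\star\|_\infty$.

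I would first dispatch the predictor (\ref{eq:theta_model_first_step}). Rewritten as $(1 + \beta B)\,w_j^\star - \beta B\,w_{j-1}^\star = (1-\beta A)\,w_j^n + \beta A\,w_{j-1}^n$ with $A = c_e\dt/\dx$, the CFL (\ref{eq:bound_on_theta_and_CFL}) forces $A \leq \sqrt 2$ and hence $\beta A \leq \beta\sqrt 2 = \sqrt 2 - 1 < 1$; the right-hand side is therefore a convex combination of $w^n$ values and the argument of Lemma~\ref{Prop_first_order_scheme} yields $TV(w^\star) \leq TV(w^n)$ and $\|w^\star\|_\infty \leq \|w^n\|_\infty$ essentially verbatim. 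The key move for the corrector is to eliminate its $\eps$-singular term by \emph{using the predictor equation itself}: solving (\ref{eq:theta_model_first_step}) gives the identity $\beta B (w_j^\star - w_{j-1}^\star) = (w_j^n - w_j^\star) - \beta A (w_j^n - w_{j-1}^n)$, which trades one $B$-factor for low-order differences. Substituting this into (\ref{eq:theta_model_second_step}), and invoking the algebraic identities $\theta(1-\beta) = \alpha\beta$, $\theta(2-\beta) = \alpha\sqrt 2/2$, $\alpha\beta + \alpha\sqrt 2/2 = \alpha$ (direct consequences of $\beta = 1-\sqrt 2/2$ and $\theta = \alpha(\sqrt 2 - 1)$), the algebra should collapse to
\begin{equation*}
(1 + C_B B)\, w_j^{n+1} - C_B B\, w_{j-1}^{n+1} = (1-\alpha)(1-A)\, w_j^n + (1-\alpha) A\, w_{j-1}^n + \alpha(1 - \tfrac{\sqrt 2}{2} A)\, w_j^\star + \tfrac{\alpha\sqrt 2}{2} A\, w_{j-1}^\star,
\end{equation*}
with $C_B = 1 - \alpha\beta \geq 0$. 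The two CFL inequalities in (\ref{eq:bound_on_theta_and_CFL}) are exactly what is required for the four coefficients on the right to be non-negative, and by construction they sum to $(1-\alpha)+\alpha = 1$.

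Once this collapsed form is established, the stability estimates follow routinely. For $L^\infty$, dividing by $1 + C_B B > 0$ expresses $w_j^{n+1}$ as a convex combination of $w_{j-1}^{n+1}$ and a convex combination of $\{w_{j-1}^n, w_j^n, w_{j-1}^\star, w_j^\star\}$; taking the discrete maximum over $j$ and using periodicity to absorb the implicit $w_{j-1}^{n+1}$ gives $\|w^{n+1}\|_\infty \leq \max(\|w^n\|_\infty, \|w^\star\|_\infty) \leq \|w^n\|_\infty$. For TVD, taking a forward spatial difference of the collapsed equation, applying the triangle inequality termwise (legitimate thanks to non-negativity), summing over $j$ and using periodicity so that the implicit contributions on the left telescope into $TV(w^{n+1})$, one obtains $TV(w^{n+1}) \leq (1-\alpha)\, TV(w^n) + \alpha\, TV(w^\star)$; chaining with the first-step bound gives $TV(w^{n+1}) \leq TV(w^n)$. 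The only genuinely non-routine ingredient is spotting the predictor substitution together with the identity $\alpha\beta + \theta(2-\beta) = \alpha$, which is precisely what cancels the $\eps$-singular contribution; everything else is the same bookkeeping as in the first-order proof.
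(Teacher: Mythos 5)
Your proof is correct and follows essentially the same route as the paper's: the decisive step in both is to use the predictor equation to trade the $\varepsilon$-singular term $\theta(1-\beta)\,\frac{c_i\dt}{\sqrt{\varepsilon}\,\dx}(w_j^\star-w_{j-1}^\star)$ for low-order differences, after which the update collapses (via $\theta(1-\beta)=\alpha\beta$ and $\theta(2-\beta)=\alpha\sqrt2/2$) to a combination with non-negative coefficients summing to one under the stated CFL conditions and the choice $\theta=\alpha\beta/(1-\beta)$. Your explicit collapsed form, with coefficients $(1-\alpha)(1-A)$, $(1-\alpha)A$, $\alpha\bigl(1-\tfrac{\sqrt2}{2}A\bigr)$, $\tfrac{\alpha\sqrt2}{2}A$, matches the paper's, and the subsequent maximum-principle and telescoping TV arguments are the same.
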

\begin{proof}
    Let us first note that, like for the first-order AP scheme, the first step (\ref{eq:theta_model_first_step}) of the scheme is TVD and $L^\infty$-stable
    under the non-restrictive explicit CFL condition $c_e\,\dt/\dx \leq 1/  \beta$ which is true since $1\leq\sqrt{2} \leq 1/  \beta $.
    Therefore, $\left \| w^\star \right \|_\infty \leq \left \| w^n \right \|_\infty$ and $TV \lp w^\star \rp \leq TV \lp w^n \rp$.

Let us prove  the $L^\infty$ stability of the second step. We denote by $j_0$ the index in $\{1,\cdots,L\}$ such that $w_{j_0}^{n+1}=\max_{j=1}^Lw_j^{n+1}$.
Thanks to the periodic boundary conditions we have $w_{j_0}^{n+1}=\max_{j=0}^{L+1}w_j^{n+1}$. Then,
we rewrite the second step (\ref{eq:theta_model_second_step}) as follows:
$$\displaylines{w_{j_0}^{n+1}\leq w_{j_0}^{n+1}+(1-\theta+\theta\, \beta)\, \frac{c_i\, \Delta t}{\sqrt{\eps}\, \Delta x} (w_{j_0}^{n+1}-w_{j_0-1}^{n+1}) =
                     \ w_{j_0}^n \lp 1 - \theta ( \beta - 1 ) \frac{ c_e \, \dt}{\dx} - ( 1 - \theta )\frac{ c_e \, \dt}{\dx} \rp\hfill\cr
                  +  \ w_{j_0-1}^n \lp \theta ( \beta - 1 ) \frac{ c_e \, \dt}{\dx} + ( 1 - \theta ) \frac{ c_e \, \dt}{\dx} \rp \cr
                  \hfill-  \ \theta ( 1 - \beta )  \frac {c_i\,  \dt } {\sqrt{\varepsilon}\, \dx} ( w_{j_0}^\star - w_{j_0-1}^\star ) - \theta ( 2 - \beta ) \frac{ c_e \, \dt}{\dx} ( w_{j_0}^\star - w_{j_0-1}^\star ).}$$
    From the first step (\ref{eq:theta_model_first_step}), we deduce
    $- \frac {c_i\,  \dt } {\sqrt{\varepsilon}} ( w_{j_0}^\star - w_{j_0-1}^\star ) =\frac{1}{ \beta} ( w_{j_0}^\star - w_{j_0}^n ) + \frac{ c_e \, \dt}{\dx} ( w_{j_0}^n - w_{j_0-1}^n )$.
    Plugging this expression into the previous equality leads to:
  $$\displaylines{ \max_{j=0}^{L+1}w_j^{n+1}=w_{j_0}^{n+1}\leq
                       \ w_{j_0}^n \Biggl( 1 - \theta\  \frac { 1 - \beta }{\beta}- \frac{ c_e \, \dt}{\dx}
                      \Bigl(1-\theta\,  \overbrace{(3-2\,\beta)}^{=(1-\beta)/\beta}\Bigl)\Biggl) +  \ w_{j_0-1}^n\ \frac{ c_e \, \dt}{\dx}\lp1-\theta\, \frac { 1 - \beta }{\beta}\rp\cr
          +  \ w_{j_0}^\star \lp \theta \frac { 1 - \beta } \beta - \theta ( 2 - \beta )  \frac{ c_e \, \dt}{\dx} \rp + w_{j_0-1}^\star \theta ( 2 - \beta ) \frac{ c_e \, \dt}{\dx}.}$$
 Note that if all coefficients are positive, we will have two convex combinations, one at time index $n$ and one at time index $\star$, and
 $$ \max_{j=0}^{L+1}w_j^{n+1}=w_{j_0}^{n+1}\leq  \max_{j=0}^{L+1}w_j^{n}\,\lp1 - \theta\  \frac { 1 - \beta } \beta\rp+\theta\  \frac { 1 - \beta } \beta\, \max_{j=0}^{L+1}w_j^{\star}
 \leq \max_{j=0}^{L+1}w_j^{n}.$$
From the above expression we deduce that necessary conditions for having positive coefficients are
 $1 - \theta\  \frac { 1 - \beta } \beta\geq0$ and $\theta \frac { 1 - \beta } \beta\geq 0$.
This
gives the condition
 $\theta\in[0,\beta/(1-\beta)]$. By setting
 $\theta=\alpha\, \frac{\beta}{ 1 - \beta },$
 with $\alpha\in[0,1]$,
then
all coefficients are positive under the C.F.L. conditions (\ref{eq:bound_on_theta_and_CFL}) and consequently the $L^\infty$ property is verified.

We can now prove the TVD property. Using~(\ref{eq:theta_model_second_step}) and the periodic boundary conditions
and using the first step, we have for all $j=1,\cdots,L$
$$\displaylines{\lp w^{n+1}_{j+1}-w^{n+1}_j\rp\, \lp1+(1-\theta+\theta\, \beta)\,\frac{c_i\, \Delta t}{\sqrt{\eps}\, \Delta x}\rp
-(1-\theta+\theta\, \beta)\,\frac{c_i\, \Delta t}{\sqrt{\eps}\, \Delta x}\, \lp w_j^n-w^n_{j-1}\rp\hfill\cr
= \lp w_{j+1}^n-w_{j}^n\rp \Biggl( 1 - \theta\  \frac { 1 - \beta } \beta- \frac{ c_e \, \dt}{\dx}
                      \Bigl(1+\theta\,  (1-2\,\beta)\Bigl)\Biggl) +  \ \lp w_{j}^n-w_{j-1}^n\rp\ \frac{ c_e \, \dt}{\dx}\Bigl(1+\theta\, (1-2\,\beta)\Bigl)\cr
          +  \ \lp w_{j+1}^\star-w_{j}^\star\rp \lp \theta \frac { 1 - \beta } \beta - \theta ( 2 - \beta )  \frac{ c_e \, \dt}{\dx} \rp + \lp w_{j}^\star-w_{j-1}^\star\rp \theta ( 2 - \beta ) \frac{ c_e \, \dt}{\dx}.}$$
Taking the absolute value, remarking that for all $a$, $b$ real numbers, $|a|-|b|\leq |a-b|$, summing for all $j=1,\cdots,L$ and
using the C.F.L. condition~(\ref{eq:bound_on_theta_and_CFL}) and the periodic boundary conditions, we conclude the proof:
$$\displaylines{\sum_{j=1}^L|w^{n+1}_{j+1}-w^{n+1}_j|
\leq  \sum_{j=1}^L|w_{j+1}^n-w_{j}^n| \Biggl( 1 - \theta\  \frac { 1 - \beta } \beta- \frac{ c_e \, \dt}{\dx}
                      \Bigl(1+\theta\,  (1-2\,\beta)\Bigl)\Biggl) \hfill\cr
                      \hfill+  \sum_{j=1}^L| w_{j}^n-w_{j-1}^n|\ \frac{ c_e \, \dt}{\dx}\Bigl(1+\theta\, (1-2\,\beta)\Bigl)\cr
          +  \sum_{j=1}^L|w_{j+1}^\star-w_{j}^\star| \lp \theta \frac { 1 - \beta } \beta - \theta ( 2 - \beta )  \frac{ c_e \, \dt}{\dx} \rp +\sum_{j=1}^L|w_{j}^\star-w_{j-1}^\star| \theta ( 2 - \beta ) \frac{ c_e \, \dt}{\dx}\leq \sum_{j=1}^L|w_{j+1}^n-w_{j}^n|.
          }$$
\end{proof}

\begin{remark}
\begin{enumerate}
\item Theorem~\ref{thm:TVD_scheme} shows that the largest possible value for $\theta$ is $\theta_M=\beta/(1-\beta)=\sqrt{2}-1\approx 0.4142$ if TVD and $L^\infty$ properties
need
to be assured. We refer to as TVD-AP scheme when discussing of scheme (\ref{eq:theta_model}) with $\theta=\theta_M$. This value depends on the type of second order time discretization chosen. Other time discretizations, may allow larger values which
may
possibly improve the accuracy of the method. This
situation will be discussed in detail in \cite{DLV}.
\item In order to additionally increase the accuracy of the method one
introduces local values for $\theta$ different for each spatial cell in equation~(\ref{eq:theta_model}). However, in this case, the proof of the TVD as well as of the $L^\infty$ stability remain an open problem. In addition, numerical experiments performed suggest that such a local parameter must be chosen related to a stencil of neighbors which is proportional to the velocity $c_i/\sqrt{\eps}$. These aspects will be studied in detail in \cite{DLV}.
 \end{enumerate}
\end{remark}
We discuss now limiters which allow to detect the situations in which the TVD or the $L^\infty$ stability property is violated and consequently to
switch
from the second order accurate scheme to the TVD-AP scheme without
loosing excessive
accuracy by diminishing parameter $\theta$. The proposed approach is based on a detection technique borrowed from the MOOD (Multidimensional Optimal Order Detection) \cite{ClaDioLou,DioClaLou,DioLouCla} framework. The idea behind this specific MOOD approach is to use the second-order oscillatory discretization (\ref{eq:ARS222_model}) whenever possible, i.e. when no oscillations appear. Instead, if at time $n$ the numerical solution presents oscillations, we discard it and we replace it by the limited TVD-AP scheme, i.e. scheme (\ref{eq:theta_model}) with $\theta=\theta_M=\beta/(1-\beta)$ which assures preservation of the demanded properties.
Since, for this specific situation, the $L^\infty$ norm of the solution is preserved in time, spurious
oscillations are checked with respect to the initial condition instead of that of the previous time iteration of the scheme. Indeed, the relevant bounds are those of the initial condition rather than the ones of the diffusive numerical approximation.
The procedure can be summarized by the following algorithm
\begin{algorithm}
    \label{thm:algorithm_MOOD_model_problem}
    \begin{enumerate}
        \item Compute a candidate solution $w^{n+1,O2}$ using the second-order AP scheme~(\ref{eq:ARS222_model}).
        \item \label{item:detection}
              Detect if this candidate solution satisfies the $L^\infty$ stability and the TVD property.
        \item \label{item:scheme_choice}
              If these two criteria are satisfied by $w^{n+1,O2}$, set $w^{n+1} = w^{n+1,O2}$;
              otherwise, compute $w^{n+1}$ using the TVD-AP scheme~(\ref{eq:theta_model}) with $\theta =\theta_M= \beta / ( 1 - \beta )$.
    \end{enumerate}
\end{algorithm}
We refer to as the AP-MOOD scheme when this Algorithm is used.
In Figure~\ref{fig:advection_step_function_MOOD}, we report the results of the advection of the rectangular pulse given by (\ref{eq:initial_data_advection_step}), for different values of the parameter $\varepsilon$. The solution is computed by the first order AP scheme, the second order AP scheme, the TVD-AP scheme and the AP-MOOD scheme. The exact solution is also reported. One can clearly see the differences in terms of accuracy for the different methods proposed and the absence of spurious oscillations for the TVD-AP and the AP-MOOD methods. In the sequel, we will extend this approach to the case of the Euler equations.
\begin{figure}[!ht]
    \centering
    \includegraphics[height=0.3\textwidth]{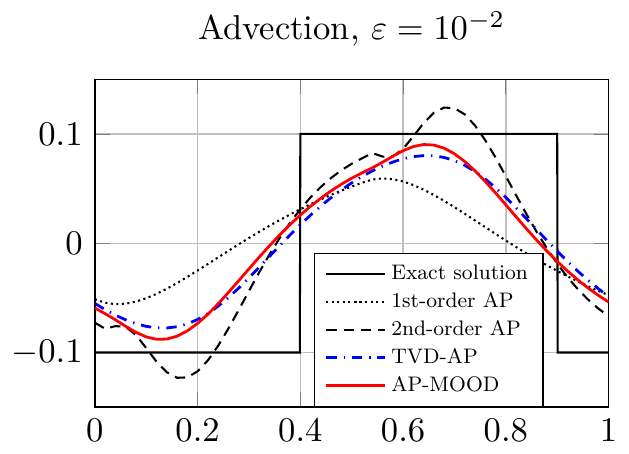} \qquad%
    \includegraphics[height=0.3\textwidth]{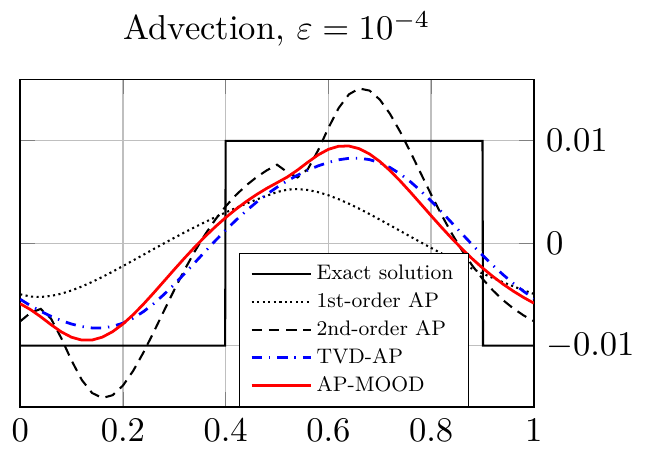}
    \caption{Advection with equation (\ref{eq:model_problem}) of the rectangular pulse~(\ref{eq:initial_data_advection_step}).
             (left panel: $\varepsilon = 10^{-2}$; right panel:~$\varepsilon = 10^{-4}$).
             Comparison of the first-order AP scheme (\ref{eq:first_order_model}) (dotted line),
             the second-order AP scheme (\ref{eq:ARS222_model}) (dashed line), the TVD-AP scheme~(\ref{eq:theta_model}) (red line) and
             of AP-MOOD scheme given by Algorithm~\ref{thm:algorithm_MOOD_model_problem} (blue line)
             against the exact solution (solid line).
    }
    \label{fig:advection_step_function_MOOD}

\end{figure}


\section{Application to the isentropic Euler system}\label{APEuler}
We now extend the idea developed in the previous Section to the isentropic Euler system.
The TVD-AP scheme reads as
\begin{equation}\label{Euler_limited_AP_scheme}
W^{n+1}_j=\theta\, W^{n+1,O1}_j+(1-\theta)\, W^{n+1,O2}_j,
\end{equation}
where $W^{n+1,O1}_j$ is given by the first-order AP scheme~(\ref{eq:first_order_scheme_fully_discrete})
while $W^{n+1,O2}_j$ by the second-order AP scheme~(\ref{semi_ARS_uncoupled}) and $\theta$ is fixed equal to $\theta_M=\beta/(1-\beta)$. This is enough to assure the TVD and the $L^\infty$
stability properties. However, since we observed that in many situations the full second order AP scheme (\ref{semi_ARS_uncoupled}) can be employed
without formation of spurious oscillations, as for the case of the linear advection described before, we aim in constructing a MOOD like technique which
permits to interpolate from the full second order to the TVD-AP scheme if needed, producing
\textit{de facto}
an highly accurate method which is referred to as the AP-MOOD method.
Unfortunately, in this case, one can not directly transpose to the Euler case the MOOD approach seen for the advection equation. In fact, in the Euler system, the variables $\rho$ and $q$ no longer satisfy either the TVD property or the $L^\infty$ bound in the continuous case. As a consequence, we cannot apply the detection criteria seen before on the conservative variables $\rho$ and $q$ to get a non oscillating scheme. It turns
out
that characteristic or Riemann invariants variables constitute a better choice for detecting spurious oscillations since it can be shown that they verify some decoupled non linear advection equations \cite{Toro}. We denote the Riemann invariants by $\phi_+$ and $\phi_-$. For the isentropic Euler system case, they are given by
\begin{equation}
   \label{eq:Riemann_invariants}
   \phi_+(W) := \frac q \rho - h(\rho),
   \text{\qquad and \qquad}
   \phi_-(W) := \frac q \rho +h(\rho),
\end{equation}
where $h(\rho)$ is the enthalpy given by $h(\rho)=\frac{2}{\gamma-1}\,\sqrt{\frac{\gamma\,\rho^{\gamma-1}}{\eps}}$ if $\gamma>1$ and $h(\rho)=\ln(\rho)$ if $\gamma=1$ \cite{Toro}.
Now, since it is known that at continuous level and for a Riemann problem at least one of the two Riemann invariants $\phi_+$ or $\phi_-$ satisfy the maximum principle \cite{Smo}, one can think to introduce a MOOD-like detection criterion which relies on testing whether both Riemann invariants break the maximum principle at the same time.
In practice, the following stability detector is used
\begin{equation*}
    \label{eq:definition_bound_Linf_norm}
    \begin{array}{l}
        M_\pm^0 = \| \Phi_\pm^0 \|_\infty, \\
        M_\pm^n = \max \lp M_\pm^{n-1}, \| \Phi_\pm^n \|_\infty \rp, \text{ for all } n > 0.
    \end{array}
\end{equation*}
Equipped with this detector, the AP-MOOD algorithm for the Euler equations reads as follows
\begin{algorithm}
    \label{thm:algorithm_MOOD_Euler}
    \begin{enumerate}
        \item Compute a candidate solution $W^{n+1,O2}$ using the second order AP scheme~(\ref{semi_ARS_uncoupled}).
        \item Detect
          if this candidate solution satisfies the maximum principle of the Riemann invariants:
        $\| \Phi_-^{n+1,O2} \|_\infty \leq M_-^n$ and $\| \Phi_+^{n+1,02} \|_\infty \leq M_+^n$;
        \item If these two criteria are satisfied, set $W^{n+1} = W^{n+1,O2}$.
              otherwise, compute $W^{n+1}$ using the TVD-AP scheme~(\ref{Euler_limited_AP_scheme}).
    \end{enumerate}
\end{algorithm}
With this approach, at most one extra computation of the semi-implicit scheme is required to ensure the TVD property and the $L^\infty$ stability.

We now turn to a second-order discretization in space. We present it in the case of one space dimension for the sake of simplicity.
To this end, we use classical MUSCL techniques \cite{VLe}, other high order space reconstruction could be employed as well without changing the core of the method.
The enunciated discretization works by introducing a linear reconstruction of the conserved variables~$W_j^n$:
\begin{equation}
    \label{eq:linear_reconstruction}
    \widehat W_j^n(x) = W_j^n + \sigma_j^n ( x - x_j ),
\end{equation}
where $\sigma_j^n$ can be a limited (if TVD property should be assured) or unlimited slope. The case of unlimited slope is used in combination with the second order in time AP scheme (\ref{semi_ARS_uncoupled}) and it is given by
\begin{equation}
    \label{eq:unlimited_slope}
    \sigma_j^n = \frac 1 2 \lp \frac { W_j^n - W_{j-1}^n } \dx + \frac { W_{j+1}^n - W_j^n } \dx \rp.
\end{equation}
This gives rise to a genuine second order in time and space Asymptotic Preserving method which however does not enjoy the TVD and $L^\infty$ property.
On the other hand, the limited slope for which the \minmod limiter is used, is employed together with AP-TVD scheme in time (\ref{Euler_limited_AP_scheme}). In this case we have
\begin{equation}
    \label{eq:minmod_limited_slope}
    \sigma_j^n = \minmod \lp \frac { W_j^n - W_{j-1}^n } \dx, \frac { W_{j+1}^n - W_j^n } \dx \rp,
\end{equation}
where the \minmod function is given by:
$$    \minmod(a,b) =
    \left\{\begin{array}{ll}
        \min(a,b) & \text{if } a > 0 \text{ and } b > 0, \\
        \max(a,b) & \text{if } a < 0 \text{ and } b < 0, \\
                0 & \text{otherwise.}
    \end{array}\right.
$$
The above combination of space and time discretization gives rise to a TVD-AP highly accurate space and time discretization of the Euler equation.
The reconstruction of variables $W$ (\ref{eq:linear_reconstruction}) is used for defining the numerical flux functions at the interfaces
\begin{equation}
    \label{eq:notations_inner_interfaces}
    W_{j,\pm}^n := \widehat W_j^n \lp x_j \pm \frac \dx 2 \rp = W_j^n \pm \frac \dx 2 \sigma_j^n,
\end{equation}
and thus, the explicit numerical flux function $\F_e$ becomes
\begin{equation}
    \label{eq:explicit_flux_MUSCL}
      (\F_e)_{\jph}^n := \frac {F_e(W_{j,+}^n) + F_e(W_{j+1,-}^n)} 2 + (\D_e)_{\jph}^n ( W_{j+1,-}^n - W_{j,+}^n ),
\end{equation}
with $(\D_e)_{\jph}^n:=  \max \lp |u_{j,+}^n|,|u_{j+1,-}^n| \rp$, while the implicit numerical flux function $\F_i$ becomes
\begin{equation}
    \label{eq:implicit_flux_MUSCL}
   (\F_i)_{\jph}^{n,n+1} :=  \frac {F_i(W_{j,+}^{n,n+1}) + F_i(W_{j+1,-}^{n,n+1})} 2 + (\D_i)_{\jph}^n ( \tilde W_{j+1,-}^{n+1} - \tilde W_{j,+}^{n+1} ),
\end{equation}
where $W_{j,+}^{n,n+1}=(\rho^{n+1}_{j,+}, q_{j,+}^n)$ and where $(\D_i)_{\jph}^n$ is the implicit viscosity coefficient, taken as half of the maximum implicit eigenvalue and given by
\begin{equation}
(\D_i)_{\jph}^n := \frac 1 2 \max \lp \sqrt{ \dfrac {p'(\rho_{j,+}^n)} \varepsilon }, \sqrt{ \dfrac {p'(\rho_{j+1,-}^n)} \varepsilon } \rp,
\end{equation}
and
\begin{equation}
    \label{eq:linear_reconstruction_np1}
    \tilde W_{j,\pm}^{n+1} = W_j^{n+1} \pm \frac \dx 2 \sigma_j^n.
\end{equation}
The second-order MUSCL extension in space is thus complete.


\section{Numerical experiments}\label{numeric}
The schemes described in the previous parts are resumed and labeled below.
\begin{itemize}
    \item The \emph{first-order AP scheme} is given by (\ref{eq:first_order_scheme_fully_discrete}),~(\ref{eq:explicit_flux}),~(\ref{eq:implicit_flux}).
    \item The \emph{second-order AP scheme} is given by~(\ref{semi_ARS_uncoupled}),~(\ref{eq:explicit_flux_MUSCL}),~(\ref{eq:implicit_flux_MUSCL})
          corresponding to the ARS(2,2,2) time discretization, supplemented with
          the unlimited linear MUSCL
          space
          reconstruction.
    \item The \emph{\tap scheme} is given by~(\ref{Euler_limited_AP_scheme}) with $\theta = \beta / ( 1 - \beta )$
        corresponding to convex combination of the first-order AP scheme and of the second-order AP scheme with (\ref{eq:explicit_flux_MUSCL}),~(\ref{eq:implicit_flux_MUSCL})
        and limiter (\ref{eq:minmod_limited_slope}) for the space discretization.
    \item The \emph{\tapm scheme} corresponds to the procedure detailed in Algorithm~\ref{thm:algorithm_MOOD_Euler} with (\ref{eq:explicit_flux_MUSCL}),~(\ref{eq:implicit_flux_MUSCL})
        and limiter (\ref{eq:minmod_limited_slope}) for the space discretization.
\end{itemize}
In addition, for all the schemes the time step is constrained by the uniform C.F.L. condition
\begin{equation*}
    \label{eq:CFL_numerics}
    \dt \leq C \frac \dx \Lambda, \text{\qquad where } \Lambda = \max_j \lp 2 \,|u_j^n| \rp,
\end{equation*}
with $C = 0.9$ for the first-order scheme and $C = 0.45$ for the other three schemes. Note that this restrictive C.F.L. (for the second-order AP, TVD-AP and AP-MOOD schemes) is uniform and is only due to the second order discretization in space. In the following, we first consider a Riemann problem and successively perform an assessment of the order of accuracy of the scheme using a smooth solution in one space dimension. Afterwards, we validate the scheme on a more complex test case and we
verify
its asymptotic stability again in one space dimension.
Finally, we propose two two-dimensional numerical experiments in which we compare our scheme with a reference solution.


\subsection{One dimensional shock tube}\label{Sod}

On the space domain $[0,1]$, we consider $\gamma=1.4$ and a Riemann problem with the following initial data:
$$    \label{eq:initial_condition_rarefaction_shock}
    \rho(0,x) =\left\{
    \begin{array}{ll}
        1 + \varepsilon &\text{if } x < 0.5, \\
                      1 &\text{otherwise;}
    \end{array}\right.
    \qquad \quad
    q(0,x) = 1.
$$
Homogeneous Neumann boundary conditions are prescribed on each boundary.
We compare the results from the three schemes in several regimes corresponding to different values of the Mach number: $\varepsilon = 1$, $\varepsilon = 10^{-2}$ and $\varepsilon = 10^{-4}$.
The results are displayed in Figure~\ref{fig:rarefaction_shock_MOOD_eps_1} for $\varepsilon = 1$ and $N=50$, in Figure~\ref{fig:rarefaction_shock_MOOD_eps_1em2} for $\varepsilon = 10^{-2}$ and $N=125$ and in Figure~\ref{fig:rarefaction_shock_MOOD_eps_1em4} for $\varepsilon = 10^{-4}$ and $N=500$ on the left for the density and on the right for the momentum. As expected, the first-order AP scheme (dotted line) is very diffusive. On the other hand, the second-order scheme (dashed line) yields a better approximation of the intermediate states. However, overshoots and undershoots appear at the heads and tails of the rarefaction wave, and near the shock wave. The \tap scheme (blue line) corrects both of these shortcomings. Finally, thanks to the MOOD procedure, the \tapm scheme (blue dashed line) yields better results than the \tap scheme.
\begin{figure}[!ht]
    \centering
    \includegraphics[height=0.3\textwidth]{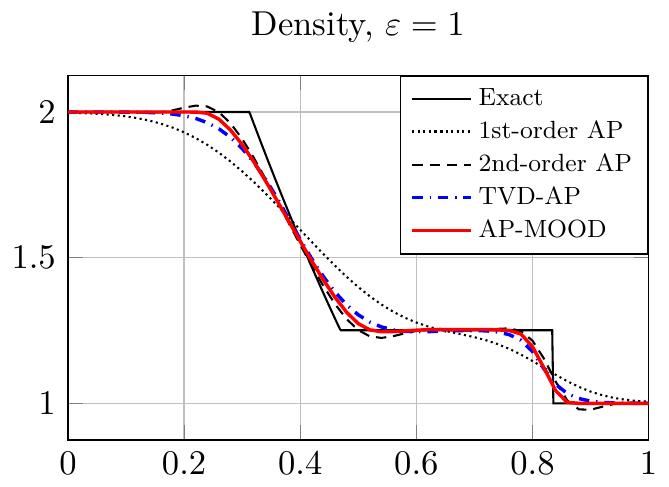} \qquad%
    \includegraphics[height=0.3\textwidth]{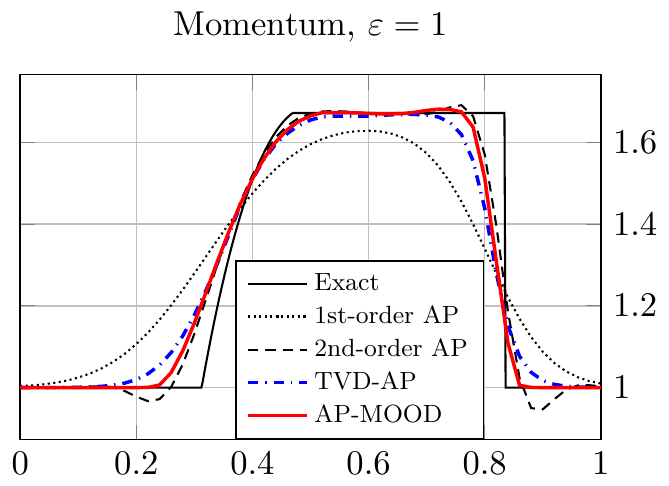}
    \caption{Shock tube with $\varepsilon = 1$ and $50$ discretization cells; results displayed at time $t_{end} = 0.125$.}
    \label{fig:rarefaction_shock_MOOD_eps_1}
\end{figure}
\begin{figure}[!ht]
    \centering
    \includegraphics[height=0.3\textwidth]{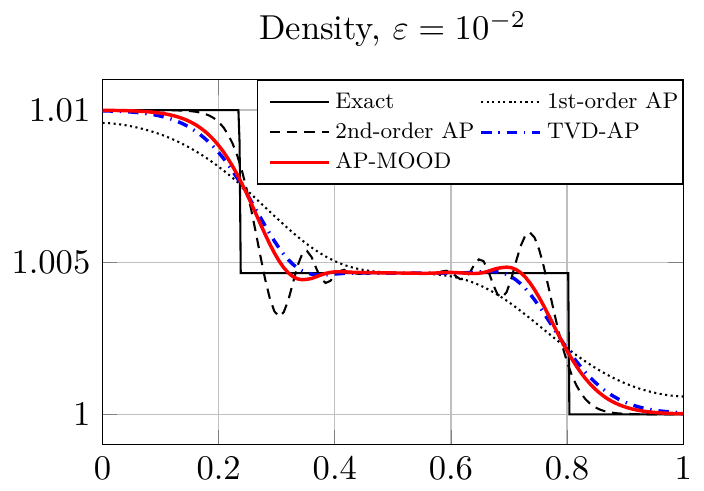} \qquad%
    \includegraphics[height=0.3\textwidth]{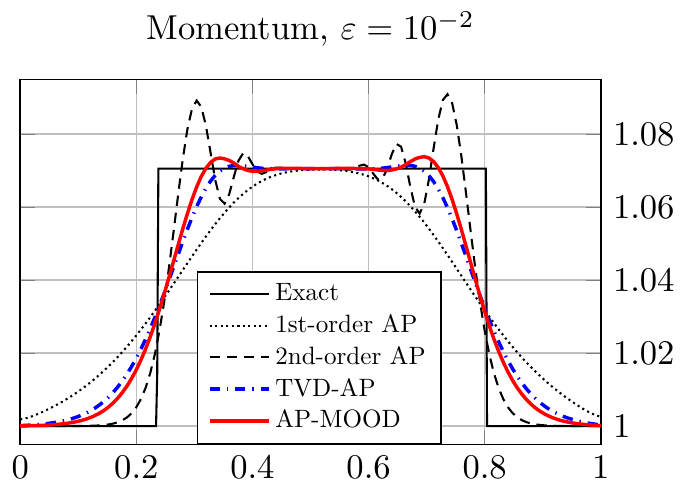}
    \caption{Shock tube with $\varepsilon = 10^{-2}$ and $125$ discretization cells, displayed at time $t_{end} = 0.02$. }
    \label{fig:rarefaction_shock_MOOD_eps_1em2}
\end{figure}
\begin{figure}[!ht]
    \centering
    \includegraphics[width=0.475\textwidth]{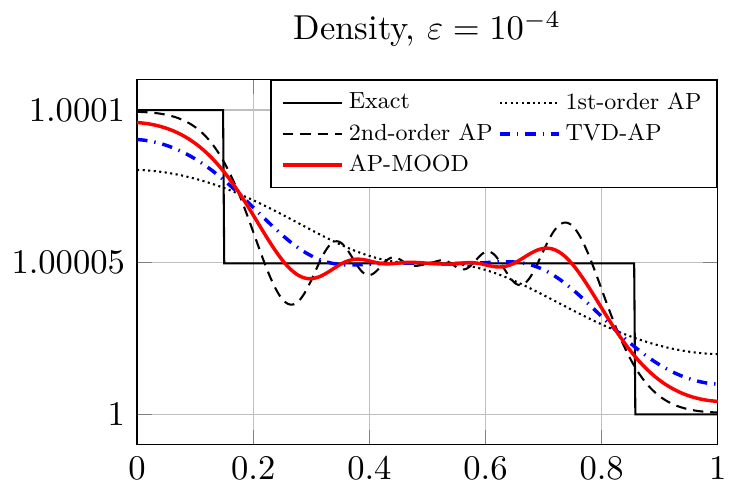} \qquad%
    \includegraphics[width=0.45\textwidth]{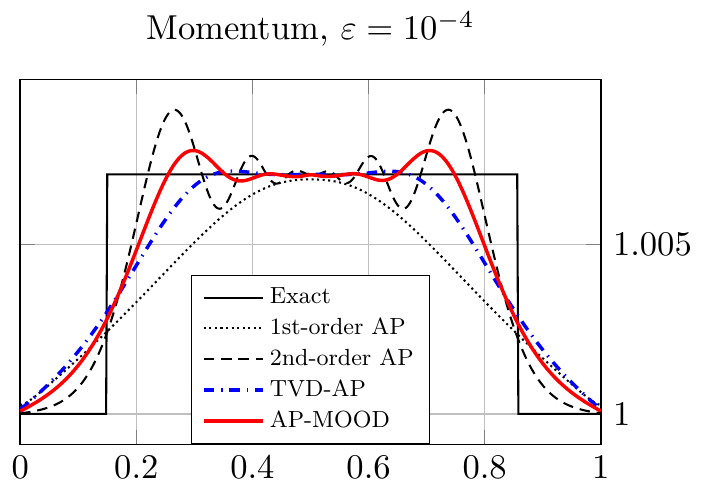}
    \caption{Shock tube with $\varepsilon = 10^{-4}$ and $500$ discretization cells, displayed at time $t_{end} = 0.0025$. }
    \label{fig:rarefaction_shock_MOOD_eps_1em4}

\end{figure}
In conclusion, the oscillatory nature of the second-order AP scheme is removed by the \tap and \tapm schemes at the cost of an expected slightly increased diffusion.

\subsection{Order of accuracy assessment in one dimension}
We consider a smooth solution from \cite{VilMaiAbg} with the following initial data
   $$ \rho(0,x) = 1 - \dfrac \varepsilon 2 \, \omega \lp \frac 2 {0.25} \lp x - \frac 1 2 \rp \rp
    \text{\qquad and \qquad}
    u(0,x) = 1 + \dfrac \varepsilon 2 \, \omega \lp \frac 2 {0.25} \lp x - \frac 1 2 \rp \rp,$$
on the space domain $[0,1]$ and where the function $\omega$ is given by $\omega(z) =\lp\frac { 2 - |z| } 2 \rp^4 ( 1 + 2 |z| )$ if  $|z| \leq 2$, and  $0$ otherwise.
If $\gamma=3$, see~\cite{VilMaiAbg}, both Riemann invariants are solution to the following Burgers equations:
$$    \begin{cases}
        \pt \phi_+ + \phi_+ \px \phi_+ = 0, \\
        \pt \phi_- + \phi_- \px \phi_- = 0.
    \end{cases}
$$
Solving the above system requires a nonlinear equation solver, such as Newton's method.

For small enough time, the exact solution $(\rho, q)$ is as smooth as the initial data. We use it
to determine the Dirichlet boundary conditions for the four schemes and to compute
 the errors between the approximate solutions and the exact solution.
We measure the $L^\infty$ errors for the density and the momentum
$$ e_\infty^n(\rho) = \max_j \left| \rho_j^n - (\rho_{ex})_j^n \right|, \ e_\infty^n(q) = \max_j \left| q_j^n - (q_{ex})_j^n \right|$$
where $^t( (\rho_{ex})_j^n, (q_{ex})_j^n )$ is the exact solution at time $t^n$ in the cell of center $x_j$.
The time at which the errors are computed are $t_{end} = 0.007$ for $\varepsilon = 1$, $t_{end} = 0.005$ for $\varepsilon = 10^{-2}$ and $t_{end} = 0.0005$ for $\varepsilon = 10^{-4}$.
For the four schemes, the density and momentum $L^\infty$  errors are displayed in Figure~\ref{fig:error_lines} in logarithmic scale with respect to the number of discretization cells.
\begin{figure}[!ht]
    \centering
    \includegraphics[width=0.45\textwidth]{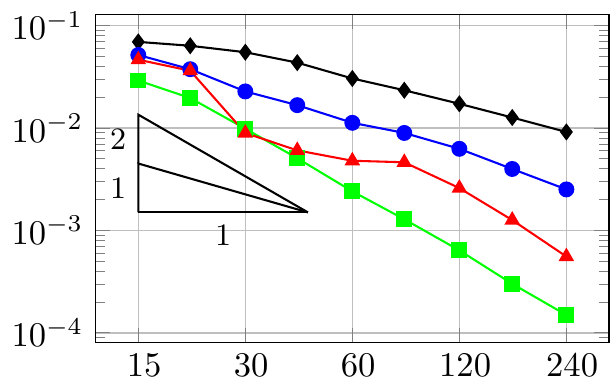} \qquad%
    \includegraphics[width=0.45\textwidth]{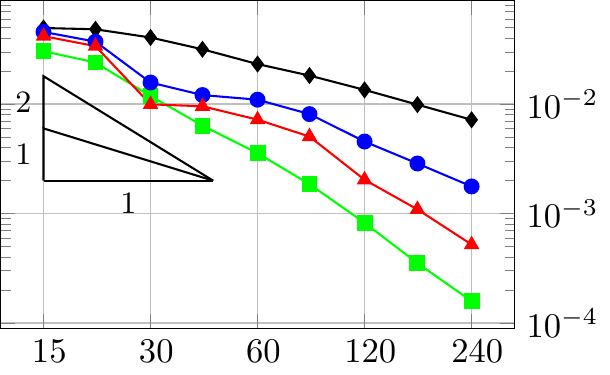}
    \medskip
    \includegraphics[width=0.45\textwidth]{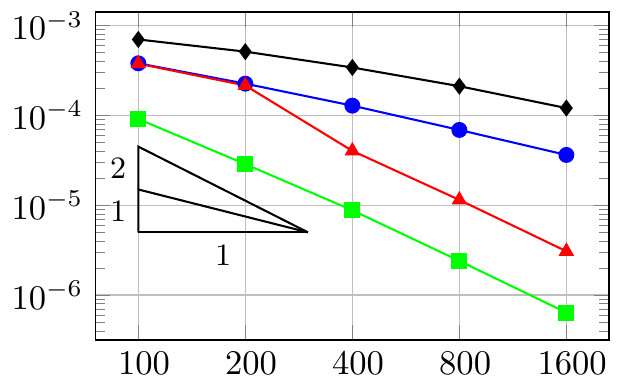} \qquad%
    \includegraphics[width=0.45\textwidth]{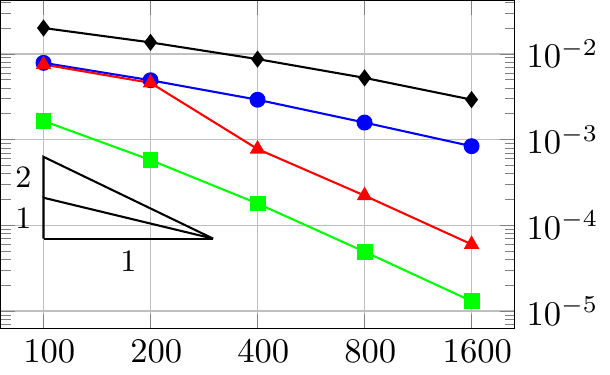}
    \medskip
    \includegraphics[width=0.45\textwidth]{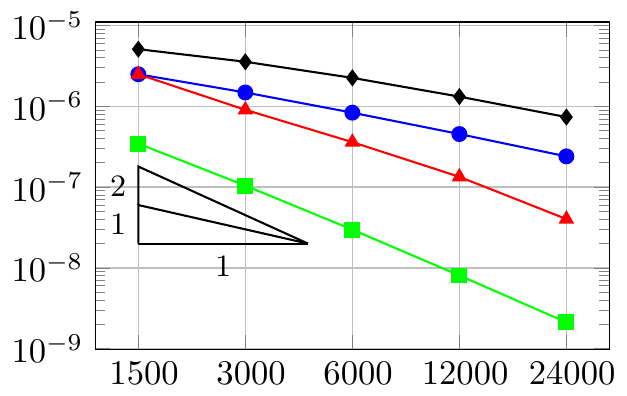} \qquad%
    \includegraphics[width=0.45\textwidth]{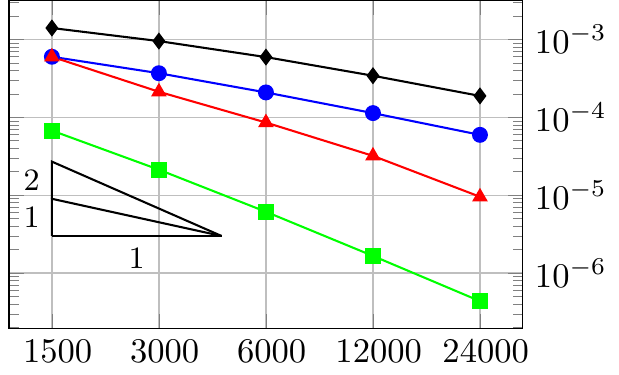}
    \caption{Error lines. From top to bottom: $\varepsilon = 1$, $\varepsilon = 10^{-2}$ and $\varepsilon = 10^{-4}$.
             Left panels: density errors; right panels: momentum errors. Black line corresponds to the first order AP scheme, blue line to the \tap scheme,
             red line to the \tapm scheme, yellow line to the second order AP unlimited.}
    \label{fig:error_lines}

\end{figure}
For all values of $\varepsilon$, the first-order AP scheme and the second-order AP scheme are respectively of order $1$ and $2$, as expected.
In addition, we note that the \tap scheme is also numerically first-order accurate or barely larger but with an $L^\infty$  error which is always lower than the one of the first order method.
The \tapm for $\varepsilon \in \{ 10^{-2}, 10^{-4} \}$
is numerically of order two in spite of the slope limiter.
For $\varepsilon = 1$, the \tapm scheme is of order more than one but less than two with an $L^\infty$  error always smaller than the one of the first order method.

\subsection{Validation and asymptotic stability in one dimension}
We now consider the problem introduced in
Degond and Tang
\cite{DegTan}. It consists in several interacting Riemann problems.
The initial data are given on the space domain~$[0,1]$ by
\begin{equation*}
    \label{eq:initial_condition_Degond_Tang}
    \rho(0,x) =
    \left\{ \begin{array}{ll}
        2                & \text{ if }  x\in[0,0.2], \\
        2+\varepsilon    & \text{ if }  x\in(0.2,0.3], \\
        2                & \text{ if }  x\in(0.3,0.7] , \\
        2-\varepsilon    & \text{ if }  x\in(0.7,0.8), \\
        2                & \text{ if }  x\in[0.8, 1],
    \end{array} \right.
    \text{\quad and \quad}
    q(0,x) =
    \left\{ \begin{array}{ll}
        1-\varepsilon/2  & \text{ if }  x\in[0,0.2], \\
        1                & \text{ if }  x\in(0.2,0.3], \\
        1 +\varepsilon/2 & \text{ if }  x\in(0.3,0.7] , \\
        1                & \text{ if }  x\in(0.7,0.8), \\
        1-\varepsilon/2  & \text{ if }  x\in[0.8, 1].
    \end{array} \right.
\end{equation*}
supplemented by periodic boundary conditions. We choose $\gamma=1.4$. Here, the goal is to validate the proposed schemes in both the compressible and the incompressible regimes.
The reference solution is computed with the first order-AP scheme
on a refined mesh in space and time.
Figure~\ref{fig:Degond_Tang_eps_1} reports the results for the density on the left and the momentum on the right for $\varepsilon = 1$ and $N=100$ discretization cells with final time $t_{end} = 0.075$.
In the top panels of Figure~\ref{fig:Degond_Tang_eps_1em4}, we display the solution for $\varepsilon = 10^{-4}$ and $t_{end} = 0.0015$ obtained with $1500$
cells.
In the bottom panels, we have refined the space-time mesh to study the convergence of the numerical approximations.
\begin{figure}[!ht]
    \centering
    \includegraphics[height=0.33\textwidth]{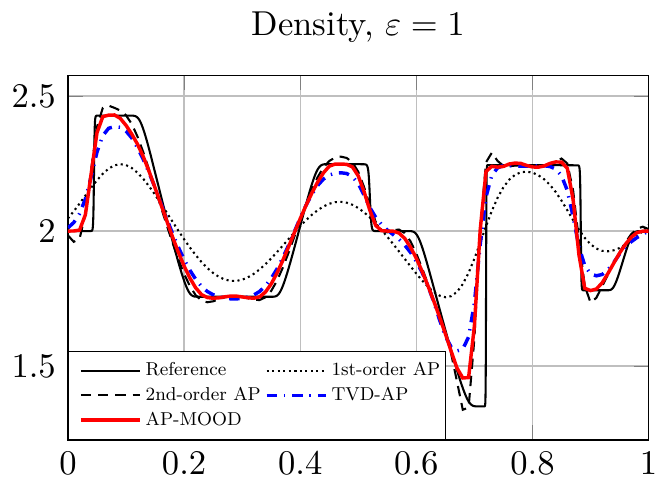} \qquad%
    \includegraphics[height=0.33\textwidth]{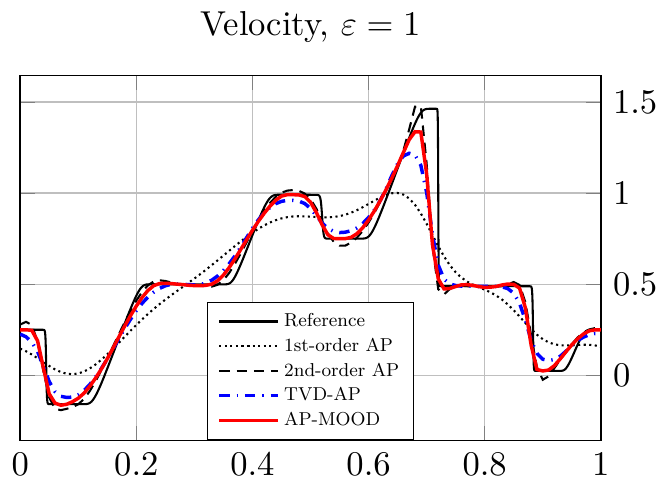}
    \caption{Degond-Tang experiment with $\varepsilon = 1$ and $100$ discretization cells at time $t = 0.075$ and $N=100$ mesh points. Density left image, momentum right image.}
    \label{fig:Degond_Tang_eps_1}
\end{figure}

As in the previous case, the first-order AP scheme is very diffusive and it smears out all shock waves.
The second-order AP scheme yields a less diffusive approximation, but it is not TVD because of overshoots and oscillations, while the \tap and the \tapm scheme decrease the diffusion, and therefore greatly improve the numerical approximation compared to the first-order AP scheme.

\begin{figure}[!ht]
    \centering
    \includegraphics[height=0.3\textwidth]{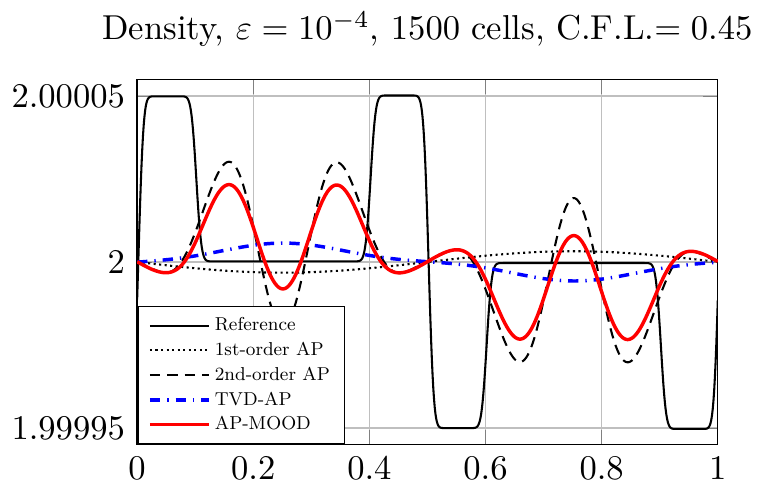} \qquad%
    \includegraphics[height=0.3\textwidth]{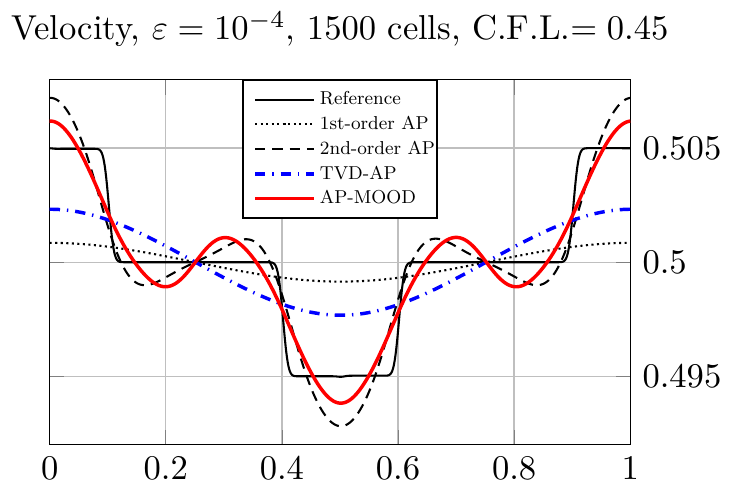}
    \medskip
    \includegraphics[height=0.3\textwidth]{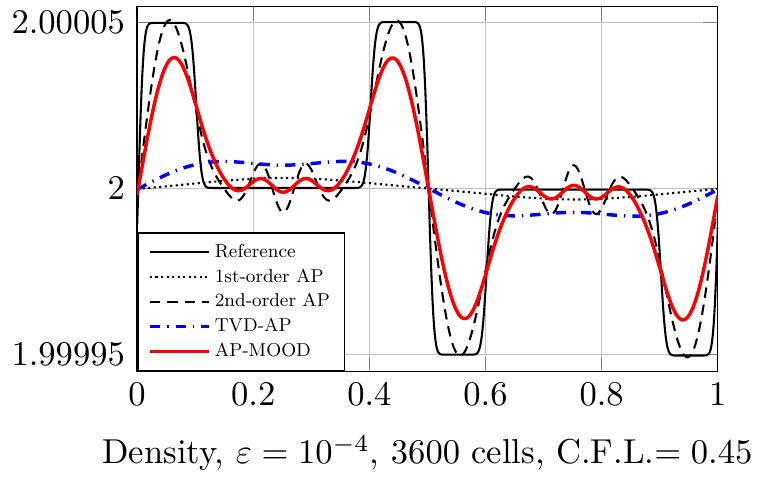} \qquad%
    \includegraphics[height=0.3\textwidth]{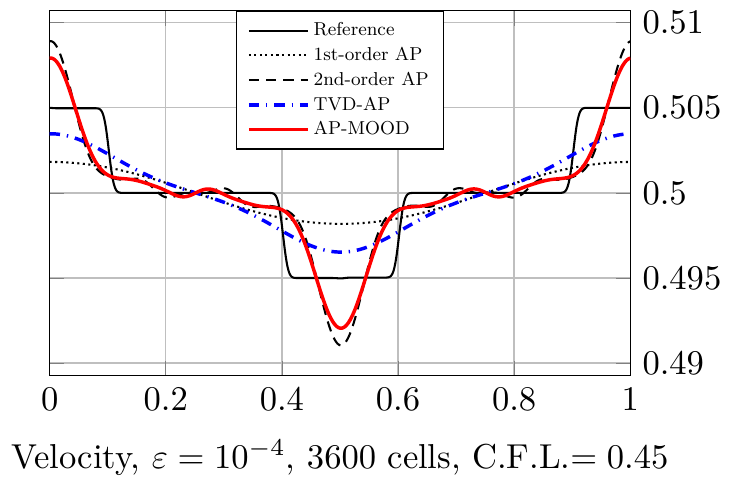}
    \caption{Degond-Tang experiment with $\varepsilon = 10^{-4}$ at time $t = 0.0015$ and $N=1500$ points for the top panels and $N=3600$ for the bottom panels.  Density left images, momentum right images.}
    \label{fig:Degond_Tang_eps_1em4}

\end{figure}
In Figure~\ref{fig:Degond_Tang_eps_1em4}, we observe that while the first-order AP scheme projects the approximate solution onto the incompressible limit
and avoids computing the small structures and the fast waves present in the reference solution, the second order and the \tapm scheme
appropriately capture the micro-structure of the solution, still allowing for much larger time steps.
Therefore, if one is interested into the small structures close to the incompressible limit,
then high accurate numerical schemes seem to be highly relevant.

\subsection{Order of accuracy in two dimensions}
We measure the order of accuracy of the scheme in two space dimensions. We consider $\gamma=1$ and the following smooth exact solution of the isentropic Euler system (\ref{eq:isentropic_2D}):
\begin{equation*}
    \label{eq:smooth_exact_solution_with_constants}
    \begin{aligned}
        \rho_{ex}(x,y,t) &= \rho_\infty - \frac { a^2 \varepsilon } { 8 d } e^{ 2 d ( b - \bar r(x,y,t)^2 ) }, \\
           u_{ex}(x,y,t) &= u_\infty + a \bar y(t) \sqrt{ \frac \gamma 2 } e^{ d ( b - \bar r(x,y,t)^2 ) } \lp \rho_{ex}(x,y,t) \rp^{\frac \gamma 2 - 1 }, \\
           v_{ex}(x,y,t) &= v_\infty - a \bar x(t) \sqrt{ \frac \gamma 2 } e^{ d ( b - \bar r(x,y,t)^2 ) } \lp \rho_{ex}(x,y,t) \rp^{\frac \gamma 2 - 1 },
    \end{aligned}
\end{equation*}
where we have set $\bar r(x,y,t)^2 = \bar x(t)^2 + \bar y(t)^2$, with
$ \bar x(t) = x - x_0 - u_\infty t$,
       $ \bar y(t) = y - y_0 - v_\infty t$.
This exact solution corresponds to a vortex initially centered at $^t(x_0, y_0)$ and moving with the phase velocity~$^t(u_\infty, v_\infty)$.
For the numerical simulation, we take $\rho_\infty = 1$, $a = 1$, $b = 0$, $d = 2$, $x_0 = 0$, $y_0 = 0$ and $^t(u_\infty, v_\infty) = ^t(1, 0)$.
The space domain is $[-1.5,2.5] \times [-2,2]$. The simulations are carried out for three different values of the squared Mach number $\varepsilon$ ($\varepsilon=1$, $\varepsilon=10^{-2}$, $\varepsilon=10^{-4}$) until the final physical time $t_{end} = 1$. To assess the numerical order of accuracy, we compute the following $L^\infty$ errors for several uniform meshes containing:
$$e_\infty^n(\rho) = \max_{j,k} \left| \rho_{j,k}^n - \lp \rho_{ex} \rp_{j,k}^n \right|,
\quad\hbox{ and } \quad e_\infty^n(\rho U) = \max_{j,k} \left| \lp \rho \sqrt{u^2+v^2} \rp_{j,k}^n \right.\left.- \lp \rho_{ex} \sqrt{ u_{ex}^2 + v_{ex}^2 } \rp_{j,k}^n \right|.$$

For the four schemes, the errors are collected in Table~\ref{tab:order_2D_eps_1}  for $\varepsilon = 1$,
                                            Table~\ref{tab:order_2D_eps_1em2}  for $\varepsilon = 10^{-2}$ and
                                            Table~\ref{tab:order_2D_eps_1em4}  for $\varepsilon = 10^{-4}$.
In addition, we display error lines in Figure~\ref{fig:error_lines_2D}.

\begin{table}[!ht]
    \centering
    \begin{tabular}{@{}lcccccccc@{}}
        \toprule
              & \multicolumn{2}{c}{1st-order AP} & \multicolumn{2}{c}{\tap} & \multicolumn{2}{c}{2nd-order AP} & \multicolumn{2}{c}{\tapm} \\
                \cmidrule(lr){2-3}                \cmidrule(lr){4-5}         \cmidrule(lr){6-7}                 \cmidrule(lr){8-9}
        N     & $e_\infty^n(\rho)$ & order & $e_\infty^n(\rho)$ & order & $e_\infty^n(\rho)$ & order & $e_\infty^n(\rho)$ & order \\ \midrule
        625   & 4.30e-02         &  ---  & 1.93e-02         &  ---  & 8.84e-03         &  ---  & 1.04e-02         &  ---  \\
        2500  & 3.36e-02         &  0.35 & 6.05e-03         &  1.67 & 1.66e-03         &  2.41 & 2.14e-03         &  2.28 \\
        10000 & 2.20e-02         &  0.61 & 2.08e-03         &  1.54 & 2.87e-04         &  2.53 & 6.31e-04         &  1.76 \\
        40000 & 1.30e-02         &  0.76 & 7.63e-04         &  1.45 & 5.63e-05         &  2.35 & 1.80e-04         &  1.81 \\
                        \cmidrule(lr){2-3}                \cmidrule(lr){4-5}         \cmidrule(lr){6-7}                 \cmidrule(lr){8-9}
        N     & $e_\infty^n(\rho U)$ & order & $e_\infty^n(\rho U)$ & order & $e_\infty^n(\rho U)$ & order & $e_\infty^n(\rho U)$ & order \\ \midrule
        625   & 1.07e-01         &  ---  & 4.61e-02         &  ---  & 1.62e-02         &  ---  & 2.26e-02         &  ---  \\
        2500  & 7.59e-02         &  0.50 & 1.25e-02         &  1.88 & 3.02e-03         &  2.42 & 4.40e-03         &  2.36 \\
        10000 & 4.73e-02         &  0.68 & 5.19e-03         &  1.27 & 5.33e-04         &  2.50 & 1.47e-03         &  1.59 \\
        40000 & 2.69e-02         &  0.81 & 2.54e-03         &  1.03 & 1.09e-04         &  2.29 & 4.84e-04         &  1.60 \\ \bottomrule
    \end{tabular}
    \caption{Density and momentum norm  errors and order of accuracy with $\varepsilon = 1$ for the four schemes.}
    \label{tab:order_2D_eps_1}
\end{table}

\begin{table}[!ht]
    \centering
    \begin{tabular}{@{}lcccccccc@{}}
        \toprule
              & \multicolumn{2}{c}{1st-order AP} & \multicolumn{2}{c}{\tap} & \multicolumn{2}{c}{2nd-order AP} & \multicolumn{2}{c}{\tapm} \\
                \cmidrule(lr){2-3}                \cmidrule(lr){4-5}         \cmidrule(lr){6-7}                 \cmidrule(lr){8-9}
        N     & $e_\infty^n(\rho)$ & order & $e_\infty^n(\rho)$ & order & $e_\infty^n(\rho)$ & order & $e_\infty^n(\rho)$ & order \\ \midrule
        625   & 5.58e-04         &  ---  & 3.57e-04         &  ---  & 1.57e-04         &  ---  & 2.46e-04         &  ---  \\
        2500  & 5.16e-04         &  0.11 & 1.41e-04         &  1.34 & 3.31e-05         &  2.25 & 4.49e-05         &  2.46 \\
        10000 & 4.20e-04         &  0.30 & 4.94e-05         &  1.52 & 4.68e-06         &  2.82 & 1.68e-05         &  1.42 \\
        40000 & 3.02e-04         &  0.48 & 1.55e-05         &  1.67 & 6.33e-07         &  2.89 & 4.37e-06         &  1.94 \\
                \cmidrule(lr){2-3}                \cmidrule(lr){4-5}         \cmidrule(lr){6-7}                 \cmidrule(lr){8-9}
        N     & $e_\infty^n(\rho U)$ & order & $e_\infty^n(\rho U)$ & order & $e_\infty^n(\rho U)$ & order & $e_\infty^n(\rho U)$ & order \\ \midrule
        625   & 1.51e-01         &  ---  & 7.79e-02         &  ---  & 3.19e-02         &  ---  & 3.88e-02         &  ---  \\
        2500  & 1.28e-01         &  0.24 & 2.84e-02         &  1.46 & 6.04e-03         &  2.40 & 6.81e-03         &  2.51 \\
        10000 & 9.52e-02         &  0.43 & 9.35e-03         &  1.60 & 8.50e-04         &  2.83 & 1.38e-03         &  2.30 \\
        40000 & 6.31e-02         &  0.59 & 2.81e-03         &  1.74 & 1.15e-04         &  2.89 & 4.57e-04         &  1.59 \\ \bottomrule
    \end{tabular}
    \caption{Density and momentum norm errors and order of accuracy with $\varepsilon = 10^{-2}$ for the four schemes.}
    \label{tab:order_2D_eps_1em2}
\end{table}

\begin{table}[!ht]
    \centering
    \begin{tabular}{@{}lcccccccc@{}}
        \toprule
              & \multicolumn{2}{c}{1st-order AP} & \multicolumn{2}{c}{\tap} & \multicolumn{2}{c}{2nd-order AP} & \multicolumn{2}{c}{\tapm} \\
                \cmidrule(lr){2-3}                \cmidrule(lr){4-5}         \cmidrule(lr){6-7}                 \cmidrule(lr){8-9}
        N      & $e_\infty^n(\rho)$ & order & $e_\infty^n(\rho)$ & order & $e_\infty^n(\rho)$ & order & $e_\infty^n(\rho)$ & order \\ \midrule
           625 & 2.42e-05         &  ---  & 1.12e-05         &  ---  & 5.32e-06         &  ---  & 6.33e-06         &  ---  \\
          2500 & 2.21e-05         &  0.13 & 1.27e-05         & -0.18 & 1.75e-06         &  1.60 & 1.79e-06         &  1.82 \\
         10000 & 1.17e-05         &  0.91 & 2.97e-06         &  2.10 & 8.31e-07         &  1.08 & 7.88e-07         &  1.19 \\
         40000 & 9.33e-06         &  0.33 & 2.06e-06         &  0.53 & 1.19e-07         &  2.80 & 1.18e-07         &  2.74 \\
                         \cmidrule(lr){2-3}                \cmidrule(lr){4-5}         \cmidrule(lr){6-7}                 \cmidrule(lr){8-9}
        N      & $e_\infty^n(\rho U)$ & order & $e_\infty^n(\rho U)$ & order & $e_\infty^n(\rho U)$ & order & $e_\infty^n(\rho U)$ & order \\ \midrule
           625 & 1.61e-01         &  ---  & 8.81e-02         &  ---  & 3.74e-02         &  ---  & 4.43e-02         &  ---  \\
          2500 & 1.43e-01         &  0.16 & 4.40e-02         &  1.00 & 8.76e-03         &  2.09 & 9.17e-03         &  2.27 \\
         10000 & 1.17e-01         &  0.29 & 1.72e-02         &  1.36 & 1.65e-03         &  2.41 & 1.75e-03         &  2.39 \\
         40000 & 8.59e-02         &  0.45 & 5.69e-03         &  1.59 & 3.06e-04         &  2.43 & 3.05e-04         &  2.52 \\ \bottomrule
    \end{tabular}
    \caption{Density and momentum norm errors and order of accuracy with $\varepsilon = 10^{-4}$ for the four schemes.}
    \label{tab:order_2D_eps_1em4}
\end{table}

\begin{figure}[!ht]
    \centering
    \includegraphics[width=0.4\textwidth]{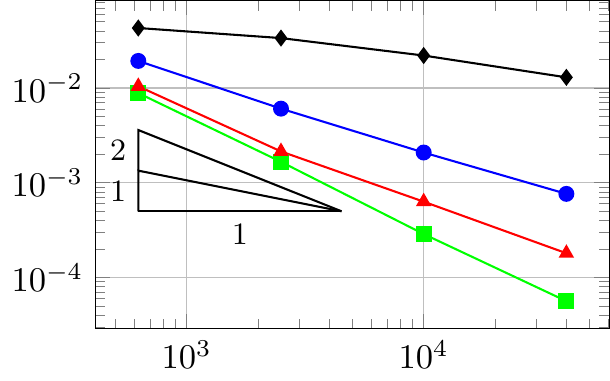} \qquad%
    \includegraphics[width=0.4\textwidth]{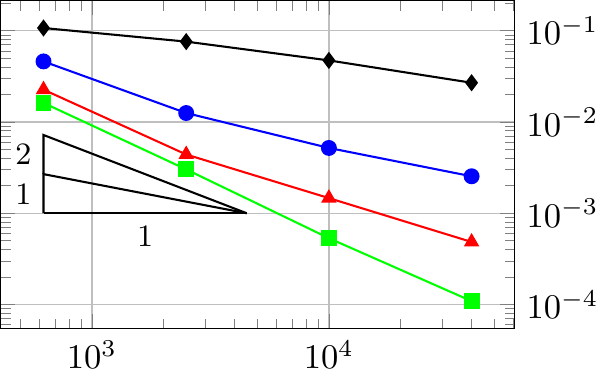}
    \medskip
    \includegraphics[width=0.4\textwidth]{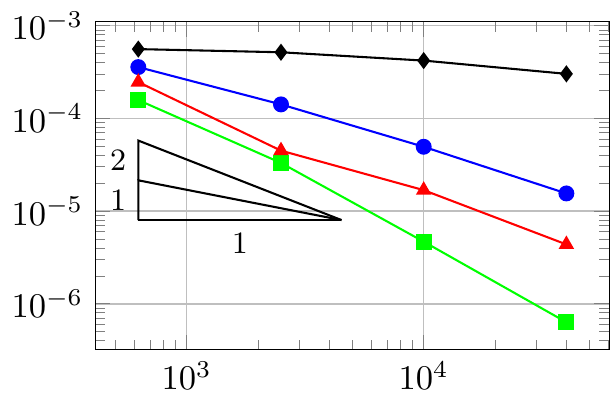} \qquad%
    \includegraphics[width=0.4\textwidth]{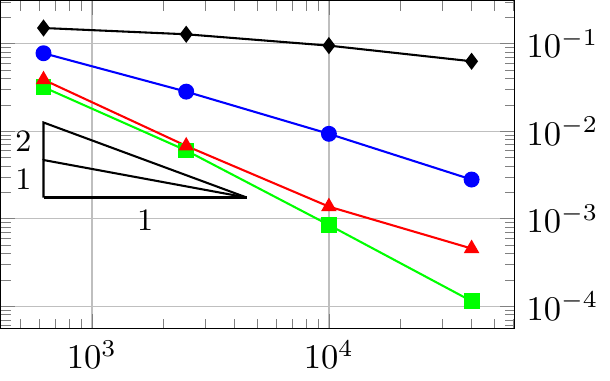}
    \medskip
    \includegraphics[width=0.4\textwidth]{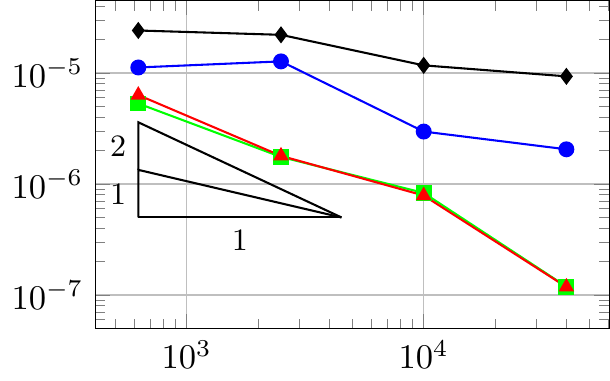} \qquad%
    \includegraphics[width=0.4\textwidth]{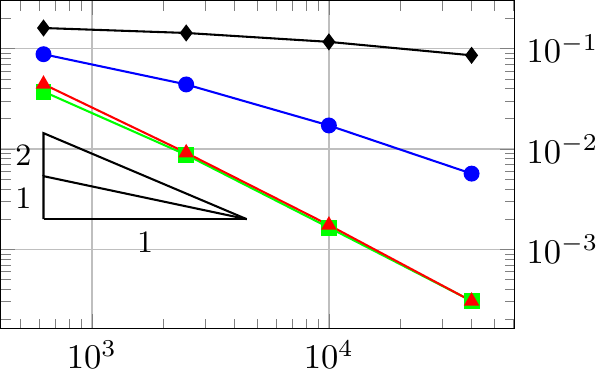}
    \caption{Error lines for the 2D steady vortex.
             From top to bottom: $\varepsilon = 1$, $\varepsilon = 10^{-2}$ and $\varepsilon = 10^{-4}$.
             Left panels: density errors; right panels: momentum errors. Black line corresponds to the first order AP scheme, blue line to the \tap scheme,
             red line to the \tapm scheme, yellow line to the second order AP unlimited.}
    \label{fig:error_lines_2D}

\end{figure}
From these results, we draw similar conclusions as in the 1D case. The errors of \tap and the \tapm schemes lie in between the first and the second order slopes as well as the errors.

\subsection{Asymptotically consistency of the schemes in two dimensions}
We now perform a 2D validation experiment, initially described in \cite{EShu} and used more recently in \cite{BosRusSca}.
It is particularly relevant since, for small values of $\eps$ we can compare the compressible numerical approximations to an approximate solution of the incompressible
Euler equations. In this way, we can measure the asymptotically consistency of our approximations in the low Mach number limit.

The initial data are well-prepared.
Indeed, on the space domain $[0, 2\pi]^2$, we take a constant density $\rho = \pi / 15$, and the initial incompressible velocity field $U=(u,v)$ is given by:
\begin{equation*}
    \label{eq:initial_condition_double_shear_layer}
    u(x,y,0) =
    \begin{cases}
         \tanh ( ( y - \pi / 2 ) / \rho ), & \text{if } y \leq \pi, \\
         \tanh ( ( 3 \pi / 2 - y ) / \rho ), & \text{otherwise,}
    \end{cases}
    \qquad
    v(x,y,0) = 0.05 \sin(x).
\end{equation*}
In addition, we take $\gamma = 1$ and
we prescribe periodic boundary conditions for the compressible Euler system.

To determine the incompressible approximate solution, we consider the vorticity formulation of the incompressible Euler system, given by:
\begin{equation}
    \label{eq:Euler_vorticity}
    \pt \omega + U \cdot \nabla \omega = 0,
\end{equation}
and we recall that the vorticity $\omega$ is given by $\omega = \partial_x v - \partial_y u$.
Since $\nabla \cdot U = 0$, there exists a stream function $\psi$ such that $U = {^t(} \partial_y \psi, -\partial_x \psi)$ and $- \Delta \psi = \omega$.
From these observations, we can obtain a reference solution.
We compute the time evolution of the vorticity by repeating the following three steps:
we first compute the stream function $\psi$, then the associated velocity field, and finally the time update of the vorticity with \eqref{eq:Euler_vorticity}.
To solve the Poisson equation $- \Delta \psi = \omega$, we use a classical discretization of the Laplace operator, and we prescribe periodic boundary conditions.
Since this leads to a singular system, we also impose that the stream function has a null average.
This does not alter the rest of the procedure since we are only interested in the derivatives of $\psi$.
The velocity is then obtained by an application of a centered gradient discretization, and an upwind finite difference scheme provides an approximate solution for \eqref{eq:Euler_vorticity}.
Periodic boundary conditions are prescribed in both of these steps.

We stress that the reference solution is obtained from the incompressible Euler equations while
the schemes under consideration approximately solve the compressible Euler system with a very small Mach number.
The results are given in Figure~\ref{fig:double_shear_layer_eps_1em5}. We take $\varepsilon = 10^{-5}$ and $t_{end} = 6$ to compare the numerical solutions provided
by the four schemes for the compressible Euler system with the reference incompressible solution.
The mesh is constituted of $200\times 200$ cells.

We can see that the first-order scheme loses the main structure of the solution (it is worth noting that, on a finer grid, the structure
can be captured
by the first-order scheme).
The limited \tap scheme provides a smeared numerical approximation, while the \tap and second-order schemes yield similar numerical solutions.
We note that, for these
schemes, the main structure of the solution is captured.
However, the small central structures are smeared
because the grid is too coarse.
Overall, the proposed compressible schemes offer a convincing approximation of the incompressible solution when $\varepsilon$ is small enough.

\begin{figure}[!ht]

    \centering

    \begin{minipage}{0.3\textwidth}
        \includegraphics[width=\textwidth]{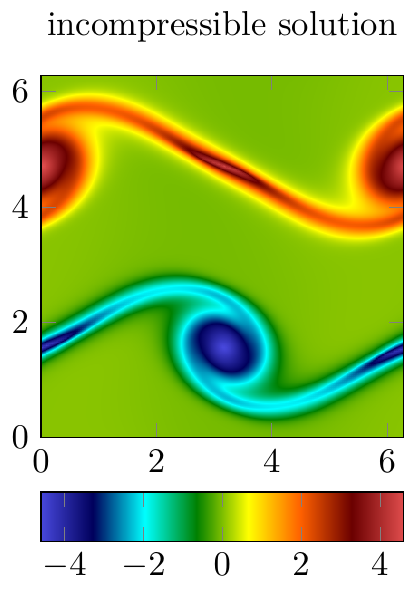}
    \end{minipage}%
    \begin{minipage}{0.3\textwidth}
        \vspace{-31pt}
        \includegraphics[width=\textwidth]{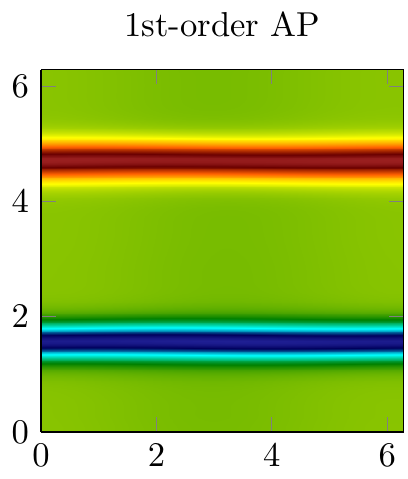}
    \end{minipage}%
    \begin{minipage}{0.3\textwidth}
        \vspace{-31pt}
        \includegraphics[width=\textwidth]{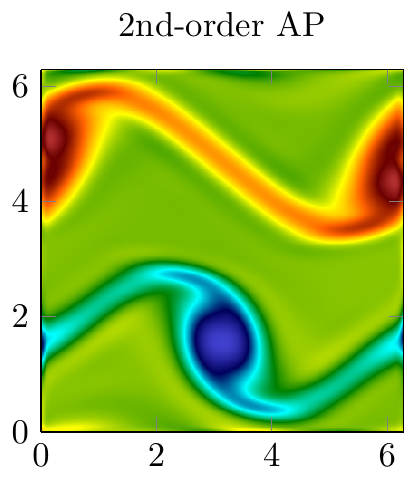}
    \end{minipage}

    \makebox[0.3\textwidth]{}%
    \begin{minipage}{0.3\textwidth}
        \includegraphics[width=\textwidth]{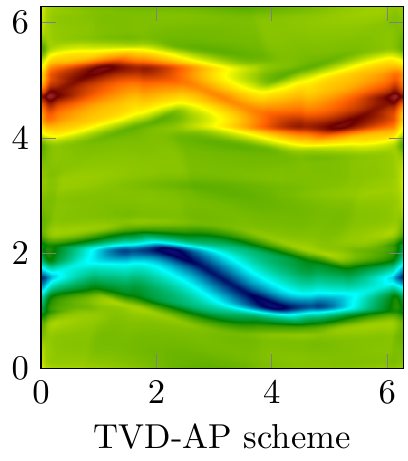}
    \end{minipage}%
    \begin{minipage}{0.3\textwidth}
        \includegraphics[width=\textwidth]{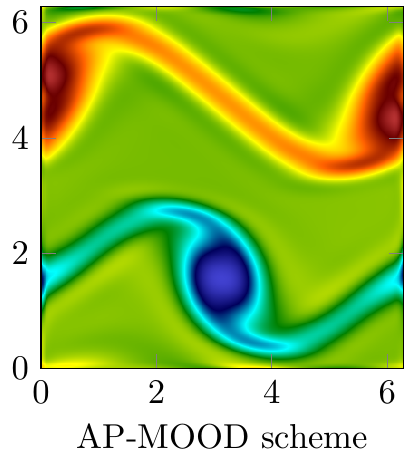}
    \end{minipage}

    \caption{Numerical solution for the double shear layer experiment with $\varepsilon = 10^{-5}$, using $200 \times 200 = 40000$ cells.}
    \label{fig:double_shear_layer_eps_1em5}

\end{figure}

Now, we use this test case in the $\varepsilon = 1$.  The numerical solution given by the four schemes at the final time $t_{end} = 10$ is compared in
Figure~\ref{fig:double_shear_layer_eps_1} to a reference solution given by the first-order AP scheme with a very fine
mesh.
The reference solution is obtained by using the first-order scheme with $400 \times 400$ cells.
In this figure, we
represent the vorticity of the solution, given by $\omega = \partial_x v - \partial_y u$, since this quantity is relevant for small $\varepsilon$.\\
In Figure~\ref{fig:double_shear_layer_eps_1}
are plotted the results when a coarse mesh made of $40\times 40$ cells is employed.
We note that the main structures of the reference solution are captured by the second-order and the \tapm scheme, with the \tap scheme being
slightly
more diffusive.
However, the first-order scheme is so diffusive that it destroys most of the structures.
In the bottom left corner of Figure~\ref{fig:double_shear_layer_eps_1}, we have added a zoom on the domain $[2.35,3.85] \times [4.4,5]$ of the reference and the second-order solutions.
We note that the very fine structure present in this domain is smeared by the second-order scheme due to the use of too coarse a mesh.

\begin{figure}[!ht]

    \centering

    \begin{minipage}{0.3\textwidth}
        \includegraphics[width=\textwidth]{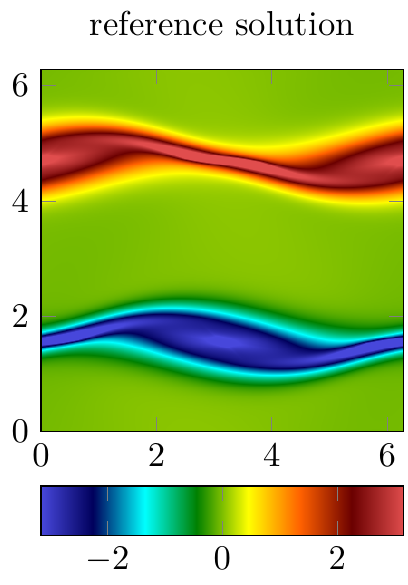}
    \end{minipage}%
    \begin{minipage}{0.3\textwidth}
        \vspace{-33pt}
        \includegraphics[width=\textwidth]{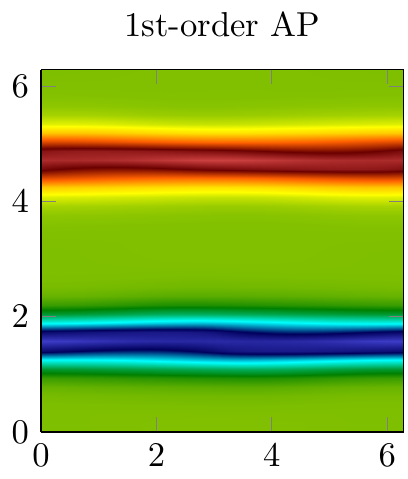}
    \end{minipage}%
    \begin{minipage}{0.3\textwidth}
        \vspace{-33pt}
        \includegraphics[width=\textwidth]{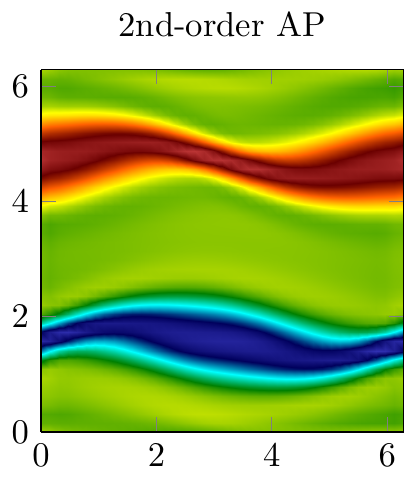}
    \end{minipage}

    \begin{minipage}{0.3\textwidth}
        \includegraphics[width=\textwidth]{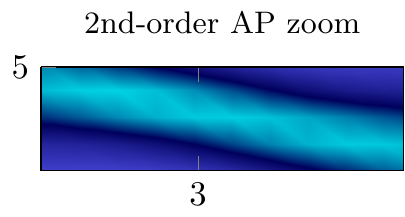}
        \includegraphics[width=\textwidth]{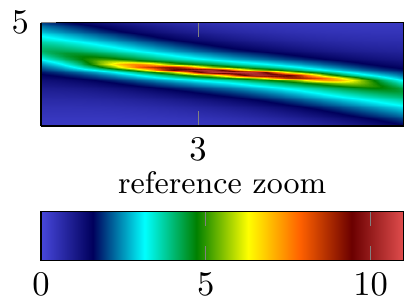}
    \end{minipage}%
    \begin{minipage}{0.3\textwidth}
        \includegraphics[width=\textwidth]{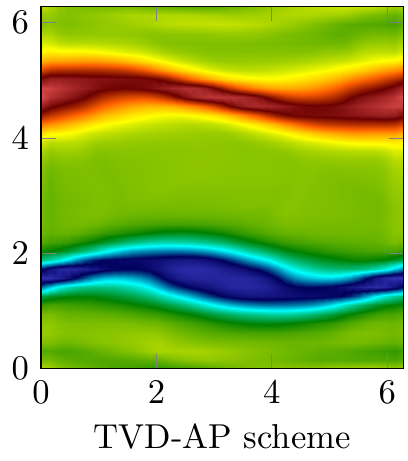}
    \end{minipage}%
    \begin{minipage}{0.3\textwidth}
        \includegraphics[width=\textwidth]{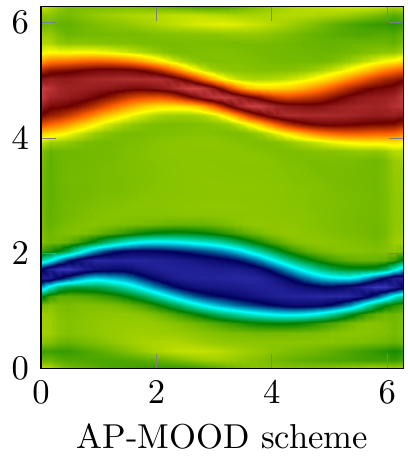}
    \end{minipage}

    \caption{Numerical solution for the double shear layer experiment with $\varepsilon = 1$.
             The numerical solutions are made with $40 \times 40 = 1600$ cells.
             The bottom left of this figure contains a zoom on a very small structure smeared by the coarse mesh but present in the reference solution.
             The color legend in the bottom left corner is only applied to this zoom.}
    \label{fig:double_shear_layer_eps_1}

\end{figure}


\section{Conclusion}
In this paper we have derived a new second order scheme for the compressible Euler equations in the low Mach number regime.
Since, non physical oscillations cannot be avoided for C.F.L. conditions larger than the one imposed by explicit methods, we have constructed a new method based on the
coupling of first order with second order in time and space schemes. This approach has permitted to get an highly accurate and asymptotic preserving scheme which additionally enjoys the TVD and $L^\infty$ property independently from the time step. Successively, the introduction of limiters has permitted a passage from the second to the TVD method only when strictly necessary further improving the overall accuracy. For all the schemes presented, the stability constraints did not depend on the Mach number value and these schemes degenerate into a consistent highly accurate discretization of the incompressible system in the low Mach limit.
Numerical experiments supported the proposed analysis. In the future, we aim in focusing on the generalization of such technique to the case of TVD schemes which couple AP schemes of order higher than two with first order in time AP methods. Moreover, we aim in exploring more in depth the use of limiters since in some cases some small oscillations remain present for the limited method. Local coupling techniques between the different order schemes can also largely improve the results obtained and they are now the subject of investigations. Extension to the full Euler equations are under study.



\begin{thebibliography}{10}

\bibitem{Ala}
T.~Alazard, Incompressible limit of the nonisentropic {E}uler equations with the
  solid wall boundary conditions.
Adv. Differential Equations, 10(1), 19--44, 2005.

\bibitem{Asano}
K. Asano, On the incompressible limit of the compressible Euler equation. Japan J. Appl. Math. 4, 455--€"488, 1987.

\bibitem{AscRuuSpi}
U.~M. Ascher, S.~J. Ruuth, R.~J. Spiteri, Implicit-explicit {R}unge-{K}utta methods for time-dependent partial
  differential equations.
Appl. Numer. Math. 25(2-3) 151--167, 1997, Special issue on time integration (Amsterdam, 1996).

\bibitem{BisLukYel}
G.~Bispen, M.~Luk\'a\v{c}ov\'a-Medvid'\!ov\'a, L.~Yelash, Asymptotic preserving {IMEX} finite volume schemes for low {M}ach
  number {E}uler equations with gravitation, J. Comput. Phys. 335, 222--248, 2017.


\bibitem{Bog}
M. Boger, F. Jaegle, R. Klein, C.-D. Munz, Coupling of compressible and incompressible flow regions using the multiple pressure variables approach.
Math. Methods Appl. Sci. 38 (2015), no. 3, 458-477.

\bibitem{BosRus}
S.~Boscarino, G.~Russo, On a class of uniformly accurate {IMEX} {R}unge-{K}utta schemes and
  applications to hyperbolic systems with relaxation, SIAM J. Sci. Comput. 31(3), 1926--1945, 2009.

\bibitem{BosRus2}
S.~Boscarino, G.~Russo, Flux-explicit {IMEX} {R}unge-{K}utta schemes for hyperbolic to
  parabolic relaxation problems, SIAM J. Numer. Anal. 51(1), 163--190, 2013.

\bibitem{BosRusSca}
S.~Boscarino, G.~Russo, L.~Scandurra, All Mach Number Second Order Semi-Implicit Scheme for the Euler Equations of Gasdynamics, preprint.

\bibitem{Got}  C. Bresten, S. Gottlieb, Z. Grant, D. Higgs, D. I. Ketcheson, A. Németh, Explicit strong stability preserving multistep Runge--Kutta methods. Math. Comp. 86, 747--769, 2017.

\bibitem{Chalon}
C. Chalons, M. Girardin, S. Kokh, Large time step and asymptotic preserving numerical schemes for the gas dynamics equations with source terms, SIAM J. Sci. Comput. 35,
2874--2902, 2013.


\bibitem{Chalon2}
C. Chalons, M. Girardin, S. Kokh,
An all-regime Lagrange-Projection like scheme for the gas dynamics equations on unstructured meshes,
Communications in Computational Physics (CICP), 20(1), 188--233, 2016.


\bibitem{ClaDioLou}
S.~Clain, S.~Diot, R.~Loub{\`e}re A high-order finite volume method for systems of conservation
  laws---{M}ulti-dimensional {O}ptimal {O}rder {D}etection ({MOOD}), J. Comput. Phys. 230(10), 4028--4050, 2011.

\bibitem{Collela}
Ph. Colella, K. Pao, A projection method for low speed flows, J. Comp. Phys. 149, 245--269, 1999.

\bibitem{Constantinescu} E. M. Constantinescu, A. Sandu, Optimal explicit strong-stability-preserving general linear methods, SIAM J. Sci. Comput. 32, 3130--3150, 2010.

\bibitem{Degond2}
F. Cordier, P. Degond, A. Kumbaro, An Asymptotic-Preserving all-speed scheme for the Euler and Navier Stokes equations, J. Comp. Phys. 231, 5685--5704, 2012.

\bibitem{Deg_Del_Sang_Vig}
P. Degond, F. Deluzet, A. Sangam, M.-H. Vignal, An Asymptotic Preserving Scheme for the Euler equations in a strong magnetic field, Comp. Phys. 228, 3540--3558, 2009.

\bibitem{DegTan}
P.~Degond, M.~Tang, All speed scheme for the low {M}ach number limit of the isentropic
  {E}uler equations.
Commun. Comput. Phys. 10(1), 1--31, 2011.

\bibitem{Degond3}
P. Degond, S. Jin, J.-G. Liu, Mach-number uniform asymptotic-preserving gauge schemes for compressible flows. Bull. Inst. Math. Acad. Sin. (N.S.) 2 (2007), no. 4, 851-892.

\bibitem{Dellacherie1}
S. Dellacherie, Analysis of Godunov type schemes applied to the compressible Euler system at low Mach number, J. Comp. Phys. 229, 978-1016, 2010.

\bibitem{Dellacherie2}
S. Dellacherie, J. Jung, P. Omnes, Preliminary results for the study of the Godunov Scheme Applied to the Linear Wave Equation with Porosity at Low Mach Number, ESAIM ProcS. 52,
 105-126, 2015.

\bibitem{DimLouVig}
G.~Dimarco, R.~Loub\`ere, M.-H. Vignal, Study of a new asymptotic preserving scheme for the {E}uler system in
  the low {M}ach number limit, SIAM J. Sci. Comput., 39(5), A2099--€"A2128, 2017.

\bibitem{dimarco1}
G.~Dimarco, L.~Pareschi, Asymptotic-preserving IMEX Runge-Kutta methods for nonlinear kinetic equations, SIAM J. Num. Anal. 1064--1087, 2013.

\bibitem{dimarco2}
G. Dimarco, L. Pareschi, High order asymptotic preserving
schemes for the Boltzmann equation, Comptes Rendus Mathematique 350, 9, 481--486, 2012.

\bibitem{DioClaLou}
S.~Diot, S.~Clain, R.~Loub{\`e}re, Improved detection criteria for the multi-dimensional optimal order
  detection ({MOOD}) on unstructured meshes with very high-order polynomials, Comput. \& Fluids, 64, 43--63, 2012.

\bibitem{DioLouCla}
S.~Diot, R.~Loub{\`e}re, and S.~Clain, The multidimensional optimal order detection method in the
  three-dimensional case: very high-order finite volume method for hyperbolic
  systems, Internat. J. Numer. Methods Fluids, 73(4), 362--392, 2013.


\bibitem{egh} R.~Eymard, T.~Gallou{\"e}t, R.~Herbin, Finite volume methods, Handbook of numerical analysis, Vol. VII,~713--1020, North-Holland, Amsterdam, 2000.

\bibitem{Ferracina} L. Ferracina, M. N. Spijker, Stepwize restrictions for the total-variation-diminishing property in general Runge-Kutta mathods, SIAM J. Numer. Anal. 42, 1073--1093, 2004.

\bibitem{Vila}
N. Grenier, J.-P. Vila, P. Villedieu, An accurate low-Mach scheme for a compressible two-fluid model applied to free-surface flows, J. Comp. Phys. 252, 1--19, 2013.

\bibitem{GotShuTad}
S.~Gottlieb, C.-W. Shu, E.~Tadmor, Strong stability-preserving high-order time discretization methods, SIAM Rev. 43(1), 89--112, 2001.

\bibitem{GH}
H. Guillard, A. Murrone, On the behavior of upwind schemes in the low Mach number limit : II. Godunov type schemes,  Comp. \& Fluids, 33, 655--675, 2004.

\bibitem{Guillard}
H. Guillard, C. Viozat, On the behavior of upwind schemes in the low Mach limit, Comp. \& Fluid, 28, 63--86, 1999.

\bibitem{Haack}
J. Haack, S. Jin, J.G. Liu, An all-speed asymptotic-preserving method for the isentropic Euler and Navier-Stokes equations,  Commun. Comput. Phys., 12, 955--980, 2012.

\bibitem{Harlow}
F. H. Harlow, and A. Amsden, A numerical fluid dynamics calculation method for all flow speeds, J. Comput. Phys, 8, 197--213, 1971.

\bibitem{Heul}
D. R. van der Heul, C. Vuik, P. Wesseling, A conservative pressure-correction method for flow at all speeds, Comp. \& Fluids, 32  (2003), pp. 1113-1132.

\bibitem{Hig}
I. Higueras, N. Happenhofer, O. Koch, F. Kupka, Optimized strong stability preserving IMEX Runge-Kutta methods. J. Comput. Appl. Math. 272 (2014), 116-140.

\bibitem{Hig2} I. Higueras, Representations of Runge-Kutta methods and strong stability preserving methods, SIAM J. Numer. Anal., 43, 924--948, 2005.

\bibitem{Issa}
R.I. Issa, A.D.Gosman, A.P. Watkins, The computation of compressible and incompressible flow of fluid with a free surface, Phys. Fluids, 8, 2182--2189, 1965.

\bibitem{Ket}
D. I. Ketcheson, C. B. Macdonald, S. Gottlieb Optimal implicit strong stability preserving Runge-Kutta methods. Appl. Numer. Math. 59 (2009), no. 2, 373-392.

\bibitem{Kheriji}
W. Kheriji, R. Herbin, J.-C. Latch\'e, Pressure correction staggered schemes for barotropic one-phase and two-phase flows, Comp. \& Fluids, 88, 524--542, 2013.

\bibitem{KlaMaj}
S.~Klainerman, A.~Majda, Singular limits of quasilinear hyperbolic systems with large
  parameters and the incompressible limit of compressible fluids, Comm. Pure Appl. Math. 34(4), 481--524, 1981.

\bibitem{KlaMaj2}
S.~Klainerman, A.~Majda, Compressible and incompressible fluids,
 Comm. Pure Appl. Math. 35(5), 629--651, 1982.

\bibitem{Klein}
R. Klein, Semi-implicit extension of a Godunov-type scheme based on low Mach number asymptotics I: One-dimensional flow, J. Comp. Phys., 121, 213--237, 1995.


\bibitem{LeV2}
R.~J. LeVeque, Finite volume methods for hyperbolic problems, Cambridge Texts in Applied Mathematics. Cambridge University Press,
  Cambridge, 2002.

\bibitem{LioMas}
P.-L. Lions, N.~Masmoudi, Incompressible limit for a viscous compressible fluid, J. Math. Pures Appl. (9), 77(6), 585--627, 1998.

\bibitem{MetSch}
G.~M\'etivier, S.~Schochet.
\newblock The incompressible limit of the non-isentropic {E}uler equations.
\newblock { Arch. Ration. Mech. Anal.}, 158(1), 61--90, 2001.

\bibitem{DLV}
V.~Michel-Dansac, G.~Dimarco, R.~Loub\`ere, and M.-H. Vignal, Total Variation Diminishing implicit-explicit schemes, Work in progress, 2017.

\bibitem{Munz}
C. D. Munz, S. Roller, R. Klein,K. J. Geratz, The extension of incompressible flow solvers to the weakly compressible regime, Comp. Fluid,  32, 173--196, 2002.

\bibitem{Munz3}
C. D. Munz, M. Dumbser, S. Roller, Linearized acoustic perturbation equations for low Mach number flow with variable density and temperature, J. Comput. Phys., 224, 352--364, 2007.

\bibitem{Noelle}
S. Noelle, G. Bispen, K.R. Arun, M.~Luk\'a\v{c}ov\'a-Medvid'\!ov\'a, C. D. Munz, A weakly asymptotic preserving low Mach number scheme for the Euler equations of gas dynamics. SIAM J. Sci. Comput. 36 (2014), no. 6, B989-B1024.

\bibitem{Munz2} J. H. Park, C. D. Munz, Multiple pressure variables methods for fluid flow at all Mach numbers, Int. J. Numer. Meth. Fluid, 49, 905--931, 2005.

\bibitem{Viozat}
H. Paill\`ere, C. Viozat, A. Kumbaro, I. Toumi, Comparison of low mach number models for natural convection problems. Heat \& Mass Tran.,  36, 567--573, 2000.

\bibitem{ParRus}
L.~Pareschi, G.~Russo, Implicit-explicit {R}unge-{K}utta schemes for stiff systems of
  differential equations, In Recent trends in numerical analysis, volume~3 of Adv.
  Theory Comput. Math. pages 269--288. Nova Sci. Publ., Huntington, NY, 2001.

\bibitem{ParRus2}
L.~Pareschi, G.~Russo, Implicit-{E}xplicit {R}unge-{K}utta schemes and applications to
  hyperbolic systems with relaxation,
J. Sci. Comput. 25(1-2), 129--155, 2005.

\bibitem{Patankar}
S. V. Patankar, Numerical heat transfer and fluid flow, New York: McGraw-Hill, 1980.

\bibitem{Rusanov}
V.V. Rusanov, Calculation of interaction of non-steady shock waves with obstacles. J. Comput. Math. Phys. USSR 1, 267--279, 1961.

\bibitem{Sch2}
S.~Schochet.
\newblock The compressible {E}uler equations in a bounded domain: existence of
  solutions and the incompressible limit.
\newblock { Comm. Math. Phys.}, 104(1), 49--75, 1986.

\bibitem{Smo}
J. A. Smoller, J. L. Johnson.
\newblock Global solutions for an extended class of hyperbolic
systems of conservation laws, Arch. Rational Mech. Anal. 32, 169--189, 1969.

\bibitem{Song}
H. Song, Energy SSP-IMEX Runge-Kutta methods for the Cahn-Hilliard equation. J. Comput. Appl. Math. 292 (2016), 576-590.

\bibitem{Toro}
E.~F. Toro, Riemann solvers and numerical methods for fluid dynamics. A
  practical introduction,  Springer-Verlag, Berlin, third edition, 2009.

\bibitem{VLe}
B.~{van Leer}, {T}owards the {U}ltimate {C}onservative {D}ifference {S}cheme, {V}.
  {A S}econd {O}rder {S}equel to {G}odunov's {M}ethod, J. Com. Phys. 32, 101--136, 1979.

\bibitem{Tang}
 M. Tang, Second order all speed method for the isentropic Euler equations. Kinet. Relat. Models 5 (2012), no. 1, 155-184.

\bibitem{Turkel}
E. Turkel, Preconditioned methods for solving the incompressible and low speed compressible equations, J. Comp. Phys., 72, 277--298, 1987.

\bibitem{VilMaiAbg}F. Vilar, P.-H. Maire, R. Abgrall, Cell-centered discontinuous Galerkin discretizations for two-dimensional scalar conservation laws on unstructured
grids and for one-dimensional Lagrangian hydrodynamics, Comput. \& Fluids, 46, 498--504, 2011.


\bibitem{EShu}
E. Weinan, C.-W. Shu, A {N}umerical {R}esolution {S}tudy of {H}igh {O}rder {E}ssentially {N}on-oscillatory {S}chemes {A}pplied to {I}ncompressible {F}low,
J. Comput. Phys. 110, 39--46, 1994.



\end{thebibliography}
\end{document}